\newtheorem{theo}{Theorem}[section]
\newtheorem{prop}[theo]{Proposition}
\newtheorem{lemm}[theo]{Lemma}
\newtheorem{cor}[theo]{Corollary}
\newtheorem{claim}[theo]{Claim}
\theoremstyle{definition}
\newtheorem{defi}[theo]{Definition}
\newtheorem{step}{Step}
\theoremstyle{remark}
\newtheorem{rem}[theo]{Remark}
\numberwithin{equation}{section}
\newcommand{\deldel}{\sqrt{-1}\partial \overline{\partial}}
\newcommand{\dbar}{\overline{\partial}}
\newcommand{\e}{\varepsilon}
\newcommand{\ome}{\widetilde{\omega}}
\newcommand{\I}[1]{\mathcal{I}(#1)}
\newcommand{\lla}[0]{{\langle\!\hspace{0.02cm} \!\langle}}
\newcommand{\rra}[0]{{\rangle\!\hspace{0.02cm}\!\rangle}}
\begin{document}

\title[]{An injectivity theorem \\
with multiplier ideal sheaves of \\
singular metrics with transcendental singularities}
 
\author{Shin-ichi Matsumura}

\address{Mathematical Institute, Tohoku University, 
6-3, Aramaki Aza-Aoba, Aoba-ku, Sendai 980-8578, Japan.}

\email{mshinichi@m.tohoku.ac.jp, mshinichi0@gmail.com}

\thanks{}

\subjclass[2000]{Primary~Primary 32J25, Secondary~14F17, 58A14}

\keywords{Injectivity theorems, Vanishing theorems, 
Singular (hermitian) metrics, 
Multiplier ideal sheaves, 
The theory of harmonic integrals, 
$L^{2}$-methods for the $\dbar$-equation. \today}

\maketitle

\begin{abstract}
The purpose of this paper is to establish 
an injectivity theorem generalized to    
pseudo-effective line bundles with 
transcendental (non-algebraic) singular hermitian metrics 
and multiplier ideal sheaves. 
As an application, we obtain a Nadel type vanishing theorem. 
For the proof, we study the asymptotic behavior of the harmonic forms 
with respect to a family of regularized metrics,   
and give a method to obtain  $L^{2}$-estimates 
of solutions of the $\dbar$-equation 
by using the de Rham-Weil isomorphism between the $\dbar$-cohomology and  
the $\rm{\check{C}}$ech cohomology.
\end{abstract}

\section{Introduction}\label{S1}
The Kodaira vanishing theorem and its generalizations 
play an important role when we consider certain fundamental problems 
in higher dimensional algebraic geometry 
(in particular birational geometry). 
The following result proved by Koll$\rm{\acute{a}}$r, 
the so-called injectivity theorem, is a celebrated generalization 
of the Kodaira vanishing theorem in algebraic geometry. 
In this paper, we study the injectivity theorem 
from the viewpoint of complex differential geometry and the theory of 
several complex variables.
Our purpose is to establish an analytic version of 
the injectivity theorem 
formulated for pseudo-effective line bundles equipped with transcendental 
singular (hermitian) metrics by using multiplier ideal sheaves.

\begin{theo}[{\cite{Kol86}, cf.\,\cite{EV92}}] \label{Kol}
Let $F$ be a semi-ample line bundle on a smooth 
projective variety $X$. 
Then, for a $($non-zero$)$ section $s$ of a positive multiple 
$F^{m}$ of the line bundle $F$, 
the multiplication map 
induced by the tensor product with $s$ 
\begin{equation*}
\Phi_{s} \colon H^{q}(X, K_{X} \otimes F) 
\xrightarrow{\otimes s} 
H^{q}(X, K_{X} \otimes F^{m+1} )
\end{equation*}
is injective for any $q$. 
Here $K_{X}$ denotes the canonical bundle of $X$. 
\end{theo}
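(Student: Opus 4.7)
The plan is to adopt the $L^{2}$-harmonic approach due to Enoki. Semi-ampleness of $F$ ensures the existence of a smooth Hermitian metric $h$ on $F$ whose Chern curvature $\sqrt{-1}\Theta_{h}(F)$ is semi-positive---for instance, the pullback of a Fubini--Study metric under a morphism $X \to \mathbb{P}^{N}$ defined by a sufficiently high power of $F$. Fix such an $h$ together with any K\"ahler form $\omega$ on $X$, and represent any class in $H^{q}(X, K_{X} \otimes F)$ by its unique $L^{2}$-harmonic representative $u$, a smooth $F$-valued $(n,q)$-form satisfying $\dbar u = 0$ and $\dbar^{\,*}_{h} u = 0$.

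First I would extract pointwise structural information from the Bochner--Kodaira--Nakano identity $\Delta''_{h} = \Delta'_{h} + [\sqrt{-1}\Theta_{h}(F),\Lambda]$ by pairing with the harmonic form $u$:
\begin{equation*}
0 = \|D'_{h} u\|^{2} + \|(D'_{h})^{*} u\|^{2} + \int_{X}\langle[\sqrt{-1}\Theta_{h}(F),\Lambda]u,u\rangle_{h}\, dV_{\omega}.
\end{equation*}
On $(n,q)$-forms $D'_{h} u$ vanishes for bidegree reasons (there are no $(n+1,q)$-forms), and semi-positivity of $\Theta_{h}$ makes the curvature operator non-negative on $(n,q)$-forms. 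All three non-negative summands must therefore be zero, yielding the pointwise identities $(D'_{h})^{*} u \equiv 0$ and $[\sqrt{-1}\Theta_{h}(F),\Lambda]u \equiv 0$ on $X$.

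The central step is to verify that $su$, viewed as an $F^{m+1}$-valued $(n,q)$-form, is $\Delta''_{h^{m+1}}$-harmonic. Holomorphicity of $s$ gives $\dbar(su) = 0$. Running Bochner--Kodaira on $F^{m+1}$ (whose Chern curvature equals $(m+1)\sqrt{-1}\Theta_{h}(F)$) and using $\dbar(su) = 0$, I obtain
\begin{equation*}
\|\dbar^{\,*}_{h^{m+1}}(su)\|^{2} = \|(D'_{h^{m+1}})^{*}(su)\|^{2} + (m+1)\int_{X}\langle[\sqrt{-1}\Theta_{h}(F),\Lambda](su),su\rangle_{h^{m+1}}\, dV_{\omega}.
\end{equation*}
The curvature integral vanishes because $[\sqrt{-1}\Theta_{h}(F),\Lambda]$ acts only on form indices and so commutes with the scalar multiplication $u \mapsto su$, while the previous step gave $[\sqrt{-1}\Theta_{h}(F),\Lambda]u = 0$. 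For the remaining term, the twisted K\"ahler identity $(D')^{*} = \sqrt{-1}\,[\Lambda,\dbar]$, combined with $\dbar(su) = 0$ and holomorphicity of $s$, yields $(D'_{h^{m+1}})^{*}(su) = -\sqrt{-1}\, s\, \dbar\Lambda u = s \cdot (D'_{h})^{*} u = 0$. Hence $\dbar^{\,*}_{h^{m+1}}(su) = 0$ and $su$ is harmonic for $h^{m+1}$.

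Injectivity is then immediate: if $\Phi_{s}[u] = 0$ then $su$ is a harmonic representative of the zero class, so $su \equiv 0$; since the zero set of the nonzero holomorphic section $s$ is a proper analytic subset, $u$ vanishes on an open dense set and hence everywhere by continuity, giving $[u] = 0$. The main obstacle is precisely the harmonicity of $su$ in the central step: it rests on the pointwise vanishing $(D'_{h})^{*} u \equiv 0$ and $[\sqrt{-1}\Theta_{h}(F),\Lambda]u \equiv 0$, which are genuine consequences of semi-positivity beyond $\dbar u = \dbar^{\,*}_{h} u = 0$ and which transfer cleanly to the twisted bundle $F^{m+1}$. Generalizing this to singular metrics $h$ with transcendental singularities---the actual theme of the paper---will demand a regularization $h_{\varepsilon} \to h$ together with delicate control of the limit of the corresponding harmonic forms.
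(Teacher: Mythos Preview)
Your proof is correct and follows precisely the Enoki harmonic-form argument that the paper itself sketches in its introduction as the smooth special case of the main theorem; the paper does not give an independent proof of this cited result but deduces it from Theorem~\ref{Eno} via the observation that semi-ample implies semi-positive. Your write-up supplies the details (pointwise vanishing of $(D'_{h})^{*}u$ and of the curvature term, and their transfer to $su$ via the K\"ahler identity $(D')^{*}=\sqrt{-1}[\Lambda,\dbar]$) that the paper leaves implicit.
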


In his paper \cite{Eno90}, Enoki gave the following injectivity theorem.  
Koll$\rm{\acute{a}}$r's proof 
is based on the Hodge theory. 
On the other hand, 
Enoki's proof is based on the theory of harmonic integrals, 
which enables us to 
approach the injectivity theorem 
from the viewpoint of complex differential geometry.

\begin{theo}[\cite{Eno90}] \label{Eno}
Let $F$ be a semi-positive line bundle on a compact 
K\"ahler manifold $X$. 
Then the same conclusion as in Theorem \ref{Kol} holds. 
\end{theo}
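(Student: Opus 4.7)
The plan is to prove Theorem~\ref{Eno} by the theory of harmonic integrals, adapting Enoki's original argument. Fix a Kähler form $\omega$ on $X$ and a smooth Hermitian metric $h$ on $F$ with $\sqrt{-1}\Theta_h(F) \geq 0$; then $h^{m+1}$ is a smooth semi-positively curved metric on $F^{m+1}$. By Hodge theory on the compact Kähler manifold $X$, every class in $H^q(X, K_X \otimes F) \cong H^{n,q}_{\dbar}(X, F)$ admits a unique harmonic representative $u$, an $F$-valued $(n,q)$-form with $\dbar u = 0$ and $\dbar^*_h u = 0$. Assume $\Phi_s[u] = 0$; the goal is to show $u \equiv 0$.

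The central step is to prove that $su \in A^{n,q}(X, F^{m+1})$ is harmonic with respect to $h^{m+1}$. The $\dbar$-closedness $\dbar(su) = 0$ is immediate from $\dbar s = 0$ and $\dbar u = 0$. The Bochner--Kodaira--Nakano identity applied to the harmonic $u$, combined with semi-positivity of the curvature, forces both nonnegative quantities $\|(D'_h)^* u\|^2$ and $\int_X \langle [\sqrt{-1}\Theta_h(F), \Lambda] u, u \rangle \, dV_\omega$ to vanish. Since $u$ is of bidegree $(n,q)$, the curvature commutator reduces to $\sqrt{-1}\Theta_h(F) \wedge \Lambda u$, so one extracts the pointwise identities $(D'_h)^* u = 0$ and $\sqrt{-1}\Theta_h(F) \wedge \Lambda u = 0$. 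Via the Kähler identities, these yield $\dbar \Lambda u = 0$ (from $(D'_h)^* u = 0$ together with $\dbar u = 0$) and $D'_h \Lambda u = 0$ (from $\dbar^*_h u = 0$).

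Now the Kähler identity $\dbar^*_{h^{m+1}} = -\sqrt{-1}[\Lambda, D'_{h^{m+1}}]$, the Leibniz rule on $F^{m+1} = F^m \otimes F$, and the automatic vanishing $D'(su) = 0$ (an $(n+1,q)$-form on an $n$-dimensional manifold) together give
\begin{equation*}
\dbar^*_{h^{m+1}}(su) = \sqrt{-1}\,(D'_{h^m} s) \wedge \Lambda u.
\end{equation*}
Applying $\dbar$ once more, and using $\dbar \Lambda u = 0$ together with the Chern connection identity $\dbar\, D'_{h^m} s = m\, \Theta_h(F) \otimes s$ (which follows from $\dbar s = 0$ and $D^2 = \Theta_h(F^m) = m\Theta_h(F)$), one finds
\begin{equation*}
\dbar\, \dbar^*_{h^{m+1}}(su) = \sqrt{-1}\, m\, \Theta_h(F) \wedge s\, \Lambda u = 0,
\end{equation*}
by the pointwise Bochner consequence $\sqrt{-1}\Theta_h(F) \wedge \Lambda u = 0$. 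Combined with $\dbar(su) = 0$, this yields $\Delta''_{h^{m+1}}(su) = 0$, so $su$ is harmonic on the compact manifold.

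The main technical difficulty is exactly this harmonicity of $su$: the semi-positivity hypothesis on $F$ is a weak one, and its content must be channeled through the Bochner--Kodaira--Nakano identity into the pointwise algebraic relation $\sqrt{-1}\Theta_h(F) \wedge \Lambda u = 0$, which is precisely what is needed to kill the would-be error term in $\dbar\, \dbar^*_{h^{m+1}}(su)$. Once $su$ is known to be harmonic, the injectivity conclusion is immediate: $[su] = \Phi_s[u] = 0$ and a harmonic representative of the zero class must vanish, so $su \equiv 0$; since $s \not\equiv 0$ is holomorphic, $\{s = 0\}$ is a proper analytic subset, so $u$ vanishes on the dense open complement and, by continuity, $u \equiv 0$ throughout $X$.
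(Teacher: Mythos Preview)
Your proof is correct and follows essentially the same strategy as the paper's sketch: represent cohomology by harmonic $(n,q)$-forms, use the Bochner--Kodaira--Nakano identity together with semi-positivity to show that $su$ is again harmonic, and conclude injectivity from uniqueness of harmonic representatives. The only tactical difference is that the paper (as made explicit in Step~2 of the proof of Theorem~\ref{main-main}, specialized to smooth $h$) applies the Nakano identity a second time directly to $su$ to obtain $\|D^{''*}_{h^{m+1}}(su)\|^2=0$, whereas you compute $\dbar^*_{h^{m+1}}(su)$ explicitly via the K\"ahler identities and then show $\dbar\,\dbar^*_{h^{m+1}}(su)=0$ using the pointwise consequence $\sqrt{-1}\Theta_h(F)\wedge\Lambda u=0$; both routes are standard and equivalent here.
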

A semi-ample line bundle is always semi-positive 
(namely, it admits a \lq \lq smooth" metric with semi-positive curvature), 
and thus Theorem \ref{Eno} leads to Theorem \ref{Kol}. 
The above results can be regarded as 
a generalization of the Kodaira vanishing theorem 
to semi-ample (semi-positive) line bundles.  
On the other hand, 
the Kodaira vanishing theorem can be generalized to 
the Nadel vanishing theorem 
by using singular metrics  with 
(strictly) positive curvature, which corresponds to 
the Kawamata-Viehweg vanishing theorem 
in algebraic geometry.  
Therefore, in the same direction, 
it is natural to generalize them  
to an injectivity theorem for singular metrics with semi-positive curvature. \vspace{0.2cm}
\vspace{0.2cm}
\[\xymatrixcolsep{4pc}\xymatrix{
\hspace{-1.5cm} \text{$
\begin{array}{cl}
\text{The Kodaira vanishing} \\
\left \{\hspace{-0.5cm} 
\begin{array}{cl}
&\text{cpx. geometry: smooth positive metrics}\\
&\text{alg. \,geometry: ample line bundles} 
\end{array}
\right.
\end{array}
$}
\ar[d]^-{\text{singular metrics}} \ar[r]^-{\text{semi-positivity}} &
\text{$\begin{array}{cl}
\text{The Koll\'ar, Enoki injectivity}\\
\left \{\hspace{-0.5cm} 
\begin{array}{cl}
&\text{cpx. : smooth semi-positive metrics}  \\
&\text{alg. : semi-ample line bundles} 
\end{array}
\right.
\end{array}
$}
\ar[d]^-{\text{singular metrics}}\\
\hspace{-1.5cm} 
\text{$
\begin{array}{cl}
\text{The Nadel, Kawamata-Viehweg vanishing}\\
\left \{\hspace{-0.5cm} 
\begin{array}{cl}
&\text{cpx. : singular positive metrics}\\
&\text{alg. : big line bundles} 
\end{array}
\right.
\end{array}
$} \ar[r]^-{\text{semi-positivity}}
&\text{$
\begin{array}{cl}
\text{{\bf{Theorem \ref{main}}}}\\
\left \{\hspace{-0.5cm}
\begin{array}{cl}
&\text{cpx. : singular semi-positive metrics}\\
&\text{alg. : pseudo-effective line bundles} 
\end{array}
\right. 
\end{array}
$}}
\]
\vspace{0.3cm}


The following theorem, which is the main result of this paper, 
is a common generalization of Koll\'ar's (Enoki's) 
injectivity theorem and the Nadel (Kawamata-Vieweg) vanishing theorem.  
Moreover Theorem \ref{main} is also a generalization of 
various results, for example, 
\cite{Eno90}, \cite{Fuj12-A}, \cite{Kol86}, 
\cite{Mat13-A}, \cite{Ohs04}, \cite{Tak97}, \cite{Tan71}. 
A (holomorphic) line bundle is said to be \textit{pseudo-effective} if 
it admits a \lq \lq singular" metric with semi-positive curvature, 
and thus Theorem \ref{main} 
can be seen as an injectivity theorem 
for pseudo-effective line bundles.

\begin{theo}[The Main Result]\label{main}
Let $F$ be a $($holomorphic$)$ line bundle on a compact K\"ahler manifold $X$ and 
$h$ be a singular $($hermitian$)$ metric with 
semi-positive curvature on $F$. 
Then, for a $($non-zero$)$ section $s$ of a positive multiple $F^{m}$  
satisfying $\sup_{X} |s|_{h^{m}} < \infty$, 
the multiplication map 
\begin{equation*}
\Phi_{s} \colon H^{q}(X, K_{X} \otimes F \otimes \I{h}) 
\xrightarrow{\otimes s} 
H^{q}(X, K_{X} \otimes F^{m+1} \otimes \I{h^{m+1}})
\end{equation*}
is $($well-defined and$)$ injective for any $q$. 
Here $\I{h}$ denotes the multiplier ideal sheaf 
associated to the singular metric $h$.  
\end{theo}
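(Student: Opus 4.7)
The plan is to reduce the statement to Enoki's smooth injectivity theorem (Theorem \ref{Eno}) by regularizing the singular metric $h$, solving the problem for each regularization, and then passing to the limit via an $L^{2}$-theoretic argument. First, I would invoke the $L^{2}$-Dolbeault resolution of $K_{X} \otimes F \otimes \I{h}$ by $F$-valued $(n,q)$-forms square-integrable against $h$ and a reference K\"ahler form $\omega$ on $X$; this identifies $H^{q}(X, K_{X} \otimes F \otimes \I{h})$ with the $L^{2}$-cohomology associated to $(h, \omega)$, and similarly for the target. Injectivity of $\Phi_{s}$ then becomes the analytic statement that whenever an $L^{2}$ $\dbar$-closed form $u$ satisfies $su = \dbar v$ for some $v$ in the $L^{2}$-space associated to $h^{m+1}$, the class $[u]$ must already vanish in the $L^{2}$-space associated to $h$.

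Next, I would apply Demailly's approximation theorem to produce a family $\{h_{\e}\}_{\e>0}$ of smooth Hermitian metrics on $F$ with $\sqrt{-1}\Theta_{h_{\e}}(F) \geq -\e \omega$ and $h_{\e} \searrow h$, arranged so that $L^{2}$-norms against $h_{\e}$ approximate those against $h$ and the condition $\sup_{X}|s|_{h^{m}} < \infty$ is inherited. To absorb the $-\e\omega$ curvature defect I would introduce a perturbed K\"ahler form $\ome_{\e}$, exploiting the completeness-type tricks standard in the theory. For each $\e$, Enoki's smooth semi-positive setup applies up to an $O(\e)$-error: representing the class by its $(\ome_{\e}, h_{\e})$-harmonic form $u_{\e}$, the Bochner--Kodaira--Nakano identity together with $\sup_{X}|s|_{h^{m}} < \infty$ forces $s u_{\e}$ to be essentially harmonic for $h_{\e}^{m+1}$, and the hypothesis $[su]=0$ then pushes $\|s u_{\e}\|$ to zero.

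The main obstacle, explicitly flagged in the abstract, is to extract from the family $\{u_{\e}\}$ a genuine $L^{2}$-primitive of $su$ with respect to $h^{m+1}$ and, more delicately, to upgrade \lq\lq $u_{\e}$ is approximately $\dbar$-exact in the regularized $L^{2}$-space" to \lq\lq $[u]$ vanishes in the singular $L^{2}$-cohomology of $h$". Because the curvature of $h$ is only \emph{singular} semi-positive, the standard global H\"ormander estimate is not at our disposal, and uniform bounds on the primitives cannot be read off from a single global inequality. My strategy, following the hint in the abstract, is to choose a sufficiently fine Stein cover $\{U_{\alpha}\}$ of $X$, solve the $\dbar$-equation locally on each $U_{\alpha}$ with $L^{2}$-estimates against the local plurisubharmonic weights of $h$ by Ohsawa--Takegoshi/H\"ormander (with constants independent of $\e$), and patch these local primitives through the \v{C}ech--Dolbeault comparison. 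The resulting global primitive is controlled in terms of \v{C}ech cocycle data, which behaves well under $\e \to 0$ because \v{C}ech cohomology with coefficients in $K_{X} \otimes F \otimes \I{h}$ is intrinsic and does not see the regularization.

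Finally, a weak-compactness argument in the singular $L^{2}$-spaces extracts a limit representative $u$ together with a limit primitive compatible with $h$; combined with the vanishing of $su$ in $L^{2}$ for $h^{m+1}$ obtained in the previous step, this yields $[u]=0$ in $H^{q}(X, K_{X} \otimes F \otimes \I{h})$, establishing the injectivity of $\Phi_{s}$.
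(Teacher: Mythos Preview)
Your overall architecture---regularize, prove $su_\e$ is asymptotically harmonic via Bochner--Kodaira--Nakano, build uniformly bounded primitives through a \v{C}ech--Dolbeault comparison, and pass to a weak limit---is exactly the paper's four-step strategy. However, there is a genuine gap in your regularization step that breaks the \v{C}ech argument.

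You propose to take $h_\e$ \emph{smooth} on all of $X$ via Demailly's approximation. But smooth $h_\e$ satisfy $\I{h_\e}=\mathcal{O}_X$, not $\I{h}$. The difficulty is this: the harmonic projection $u_\e$ of $u$ lives in $L^2_{(2)}(h_\e)$, and there is no reason for $u_\e$ (or $u-u_\e$) to lie in $L^2_{(2)}(h)$. Hence when you ``solve the $\dbar$-equation locally \ldots\ with $L^2$-estimates against the local plurisubharmonic weights of $h$'', the right-hand side is not in the weighted space you want to solve in, and H\"ormander does not apply. If instead you solve locally with weight $h_\e$, the resulting \v{C}ech cocycle $S_\e$ has coefficients in $K_X\otimes F\otimes\I{h_\e}=K_X\otimes F$, not in $K_X\otimes F\otimes\I{h}$; the ``intrinsic'' \v{C}ech complex for $\I{h}$ is then disconnected from the regularized data, and your claim that it ``does not see the regularization'' has no force.

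The paper repairs this by using the \emph{equisingular} approximation of Demailly--Peternell--Schneider (Theorem~\ref{equi}): the $h_\e$ are smooth only on a Zariski open set $Y=X\setminus Z$, but satisfy $\I{h_\e}=\I{h}$ for every $\e$. One is then forced to do harmonic theory on the non-compact $Y$ with a complete K\"ahler form $\ome$, but the payoff is that the local $\dbar$-solutions (with weight $h_\e$) produce cocycles $S_\e$ that genuinely lie in $C^q(\mathcal{U},K_X\otimes F\otimes\I{h})$, a space independent of $\e$. A further point you underestimate: obtaining primitives $V_\e$ with \emph{uniform} bounds is not a direct consequence of local H\"ormander estimates. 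The paper equips $C^\bullet(\mathcal{U},K_X\otimes F\otimes\I{h})$ with the Fr\'echet topology of local $L^2(h)$-seminorms, proves completeness (Theorem~\ref{fre}, which requires a nontrivial lemma on multiplier ideals, Lemma~\ref{key}), and then invokes the open mapping theorem for the coboundary $\delta$ to extract uniformly bounded $V_\e$ from the convergence $S_\e\to S$.
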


\begin{rem}\label{main-rem}
(1) We can show that the multiplication map from 
$H^{q}(X, K_{X} \otimes F^{\ell} \otimes \I{h^\ell})$ to 
$H^{q}(X, K_{X} \otimes F^{\ell+m} \otimes \I{h^{\ell+m}})$
is also injective for $\ell > 0$
by applying Theorem \ref{main} to $F^{\ell}$ 
and $s^{\ell} \in H^{0}(X, F^{\ell m})$. 
\vspace{0.1cm}\\
(2)
The multiplication map is well-defined 
thanks to the assumption $\sup_{X} |s|_{h^{m}} < \infty $. 
We can always apply this theorem to a pseudo-effective line bundle $F$ 
since a metric $h_{\min}$ with minimal singularities on $F$ 
satisfies $\sup_{X} |s|_{h_{\min}^{m}} < \infty $
for any section $s$ of $F^{m}$ (see \cite{Dem} for the definition of 
metrics with minimal singularities). 
\end{rem}

It is important to emphasize that 
a singular metric $h$ in our formulation may have 
transcendental (non-algebraic) singularities. 
To handle singular metrics with transcendental singularities, 
we have to take a more analytic approach 
to the cohomology groups with coefficients in 
$K_{X} \otimes F \otimes \I{h}$, 
which gives a generalization of techniques of 
\cite{Eno90}, \cite{Fuj12-A}, \cite{Mat13-A}, 
\cite{Mat13-B}, \cite{Tak97}.

Our formulation is motivated by the problem of extending 
sections from subvarieties to the ambient space. 
When we attempt to extend 
sections by the vanishing (injectivity) theorem or 
the Ohsawa-Takegoshi extension theorem, 
we need a suitable singular metric $h$. 
In many cases, the metric $h$ is constructed by 
taking the limit of suitable metrics $h_{m}$. 
For example, this strategy plays a crucial role
in the proof of the invariance of plurigenera or 
the extension theorem for pluri-canonical sections   
(see \cite{DHP13}, \cite{Pau07}, \cite{Siu98}). 
However it seems to be quite hard 
to investigate the regularity (smoothness) 
of the limit $h$, 
even if $h_{m}$ has algebraic singularities. 
Therefore it is worth for important applications 
to formulate Theorem \ref{main} for arbitrary singular metrics.

Thanks to this advantage, as applications of Theorem \ref{main}, 
we can obtain  
an injectivity theorem for 
nef and abundant line bundles 
(Corollary \ref{good}) 
and Nadel type vanishing theorems  
(Theorem \ref{gen}, Corollary \ref{main-co}),  
by considering metrics with minimal singularities 
(which do not always have algebraic singularities). 
Theorem \ref{gen} is a generalization of \cite{Mat13-A}, \cite{Mat13-B}. 
Moreover, we prove some extension theorems for pluri-canonical sections 
of log pairs motivated by the abundance conjecture in \cite{GM13}.

At the end of this section, 
we briefly explain the proof of Theorem \ref{main}. 
First we recall Enoki's proof of Theorem \ref{Eno} 
(the special case where $h$ is smooth). 
In this case, 
the cohomology group $H^{q}(X, K_{X} \otimes F)$ 
can be represented by the space 
of harmonic forms with respect to $h$
\begin{align*}
\mathcal{H}_{h}^{n, q}(F):= 
\{u \mid u 
\text{ is an $F$-valued $(n,q)$-form on } X 
\text{ with } \dbar u  = 0 \text{ and } \dbar^{*}_{h} u =0.\}, 
\end{align*}
where  $ \dbar^{*}_{h}$ is the adjoint operator of 
the $\dbar$-operator. 
For an arbitrary harmonic form $u \in \mathcal{H}_{h}^{n, q}(F)$,  
we can show that $su$ is also harmonic with respect to $h^{m+1}$  
from semi-positivity of the curvature of $h$. 
It implies that 
the multiplication map $\Phi_{s}$ 
induces the map from $\mathcal{H}_{h}^{n, q}(F)$ to 
$\mathcal{H}_{h^{m+1}}^{n, q}(F^{m+1})$, and then 
the injectivity is obvious. 
This method heavily depends on semi-positivity of the curvature.


In our situation, 
we can not directly use the theory of harmonic integrals 
since a given singular metric $h$ may have 
transcendental singularities. 
For this reason, in Step \ref{St1}, 
we first approximate the metric $h$ 
by singular metrics 
$\{ h_{\e} \}_{\e>0}$ that are smooth 
on a Zariski open set $Y$. 
Then a given cohomology class can be represented 
by the associated harmonic form  
$u_{\e}$ with respect to $h_{\e}$ on $Y$. 
We want to show that $s u_{\e}$ is harmonic 
by the same argument as in Enoki's proof. 
However, the same argument fails  
since the curvature of $h_{\e}$ is no longer semi-positive. 
Therefore, 
we investigate the asymptotic behavior of 
the harmonic form $u_{\e}$. 
This asymptotic analysis contains a new ingredient. 
In Step \ref{St2}, by generalizing Enoki's method, we prove that 
the $L^{2}$-norm 
$\| \dbar^{*}_{h_{\e}^{m+1}} su_{\e}\|$ converges to zero 
as $\e$ tends to zero. 
In Step \ref{St3}, we construct a solution $v_{\e}$ of 
the $\dbar$-equation $\dbar v_{\e} = su_{\e}$ 
such that   
the $L^{2}$-norm $\| v_{\e} \|$  
is uniformly bounded,  
by using the $\rm{\check{C}}$ech complex. 
The above arguments yield    
\begin{equation*}
\| su_{\e} \| ^{2} = 
\lla su_{\e}, \dbar v_{\e} \rra
\leq \| \dbar^{*}_{h_{\e}^{m+1}} su_{\e}\|
\| v_{\e} \| \to 0 \quad {\text{as }} \e \to 0.
\end{equation*}
In Step \ref{St4}, from these observations, we prove that 
$u_{\e}$ weakly converges to zero and this completes the proof.

This paper is organized as follows\,$:$  
In Section \ref{S2}, we summarize the fundamental facts used in this paper. 
We prove  the main result in Section \ref{S3} 
and give several applications of the main result in Section \ref{S4}. 
Compared to \cite{Mat13-B}, the crucial technique established in this paper 
is to construct a solution $v_{\e}$ of 
the $\dbar$-equation $\dbar v_{\e} = su_{\e}$ 
with uniformly bounded $L^{2}$-norm, 
by applying the De Rham-Weil isomorphism between the 
$\dbar$-cohomology and the $\rm{\check{C}}$ech cohomology. 
In Section \ref{S5}, we explain this construction 
after we study the topology of the $\rm{\check{C}}$ech complex induced 
by the local $L^2$-norms of singular metrics.  
This technique is rather complicated, 
but it seems to have more applications.

\subsection*{Acknowledgments}
The author wishes to express his gratitude to Professor Junyan Cao for 
fruitful discussions on the topology of the space of cochains, 
to Professor J\'anos Koll\'ar for a question concerning Remark \ref{main-rem},  
and to Professor Osamu Fujino 
for giving useful comments and 
teaching him the argument of Remark \ref{main-rem} (1).  
He would also like to express his gratitude to the referee for 
helpful comments and suggestions on the exposition of the paper. 
He is supported by the Grant-in-Aid for 
Young Scientists (B) $\sharp$25800051 from JSPS.


\section{Preliminaries}\label{S2}

In this section, 
we summarize the fundamental results 
for the proof of the main result.
Unless otherwise mentioned, $X$ denotes a compact K\"ahler manifold of dimension $n$ 
and $F$ denotes a (holomorphic) line bundle on $X$.

\subsection{Singular metrics and multiplier ideal sheaves}\label{S2-1}

We first recall the definition 
of singular metrics and curvatures. 
Fix a smooth (hermitian) metric $g$ on $F$. 

\begin{defi}\label{s-met}
{(Singular metrics and curvatures).}
(1) For an $L^{1}$-function $\varphi$ on $X$, 
the  metric $h$ defined by 
$
h:= g e^{-2\varphi} 
$
is called a \textit{singular hermitian metric} on $F$. 
Further $\varphi$ is called the \textit{weight} of $h$ 
with respect to the fixed smooth metric $g$. 
\vspace{0.1cm} \\ 
(2) The {\textit{curvature}} 
$\sqrt{-1} \Theta_{h}(F)$ 
of $h$ is defined by   
$$
\sqrt{-1} \Theta_{h}(F) = \sqrt{-1} \Theta_{g}(F)+ 2\deldel \varphi, 
$$
where $  \sqrt{-1} \Theta_{g}(F)$ is 
the Chern curvature of $g$. 
\end{defi}
In this paper, the singular hermitian metric is often written simply as the singular metric. 
The Levi form $\deldel \varphi$ is taken in the sense of 
distributions, 
and thus the curvature is a $(1,1)$-current 
but not always a smooth $(1,1)$-form. 
The curvature $\sqrt{-1} \Theta_{h}(F)$ of $h$ 
is said to be {\textit{semi-positive}} if 
$\sqrt{-1} \Theta_{h}(F) \geq 0$ in the sense of currents. 
When a singular metric $h$ satisfies $\sqrt{-1} \Theta_{h}(F) \geq \gamma$  
for some smooth $(1,1)$-form $\gamma$, 
the weight $\varphi$ of $h$ becomes 
a quasi-plurisubharmonic (quasi-psh for short) function.  
In particular $\varphi$ is upper semi-continuous, 
and thus it is bounded above. 

\begin{defi}
{(Multiplier ideal sheaves).}
Let $h$ be a singular metric on $F$ such that 
$\sqrt{-1} \Theta_{h}(F) \geq \gamma$  
for some smooth $(1,1)$-form $\gamma$ on $X$. 
Then the ideal sheaf $\I{h}$ defined 
to be  
\begin{equation*}
\I{h}(U):= \I{\varphi}(U):= \{f \in \mathcal{O}_{X}(U)\ \big|\ 
|f|e^{-\varphi} \in L^{2}_{\rm{loc}}(U) \}
\end{equation*}
for every open set $U \subset X$ is called 
the \textit{multiplier ideal sheaf} associated to $h$. 
\end{defi}

\subsection{Equisingular approximations}\label{S2-2}
In the proof, 
we apply the equisingular approximation to a given singular metric. 
In this subsection, 
we reformulate \cite[Theorem 2.3]{DPS01} 
with our notation and give an additional property. 

\begin{theo}[{\cite[Theorem 2.3]{DPS01}}]\label{equi}
Let $\omega$ be a positive $(1,1)$-form on $X$ and 
$h$ be a singular metric on $F$ with semi-positive curvature. 
Then there exist  singular 
metrics $\{h_{\e} \}_{1\gg \e>0}$ on $F$ with  the 
following properties\,$:$
\begin{itemize}
\item[(a)] $h_{\e}$ is smooth on $X \setminus Z_{\e}$, where $Z_{\e}$ is a subvariety on $X$.
\item[(b)]$h_{\e_{2}} \leq h_{\e_{1}} \leq h$ holds for any 
$0< \e_{1} < \e_{2} $.
\item[(c)]$\I{h}= \I{h_{\e}}$.
\item[(d)]$\sqrt{-1} \Theta_{h_{\e}}(F) \geq -\e \omega$. 
\end{itemize}
Moreover, if the set 
$\{x \in X \mid \nu(h, x) >0 \}$ is contained in 
a subvariety $Z$, then we can add the property 
that $Z_{\e} $ is contained in $Z$ for every $\e > 0$.  
Here $\nu(h, x)$ is the Lelong number at $x$ 
of the weight of $h$. 
\end{theo}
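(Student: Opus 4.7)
The bulk of the statement is the main regularization theorem of \cite{DPS01}, so my plan is to recall the DPS01 construction and then add a Skoda-type argument to derive the new assertion $Z_{\e} \subset Z$. Writing $h = g e^{-\varphi}$ for a fixed smooth background metric $g$, I would fix a finite Stein cover $\{U_{\alpha}\}$ on which $F$ trivializes, with local weights $\varphi_{\alpha}$, and on each $U_{\alpha}$ introduce the Hilbert space $\mathcal{H}_{m,\alpha}$ of holomorphic functions $f$ with $\int_{U_{\alpha}} |f|^{2} e^{-2m\varphi_{\alpha}}\, dV < \infty$. Taking an orthonormal basis $(f_{m,\alpha,j})_{j}$, the Bergman-type weight
$$\psi_{m,\alpha} := \frac{1}{2m} \log \sum_{j} |f_{m,\alpha,j}|^{2}$$
is psh, approximates $\varphi_{\alpha}$ from above up to an error $O(1/m)$, and by Ohsawa--Takegoshi satisfies $\I{m\varphi_{\alpha}} = \I{m\psi_{m,\alpha}}$. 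Gluing through the cocycle defining $F$ together with a partition of unity yields a global metric $h_{\e}$ with $\e = 1/m$; properties (a), (c) and (d) then follow as in \cite{DPS01}, and the monotonicity (b) is enforced afterwards by taking successive maxima with $h$.

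\textbf{The moreover assertion.} By construction the singular set $Z_{\e}$ equals the common zero locus of the Bergman bases (glued over $\alpha$). For any $x_{0}$ with $\nu(\varphi, x_{0}) = 0$, Skoda's integrability theorem ensures that $e^{-2m\varphi_{\alpha}}$ is locally integrable in a neighborhood of $x_{0}$ for every $m$; hence the constant function $1$ lies in $\mathcal{H}_{m,\alpha}$ for the chart $U_{\alpha}$ containing $x_{0}$, so $\sum_{j} |f_{m,\alpha,j}(x_{0})|^{2} > 0$ and $x_{0} \notin Z_{\e}$. This shows $Z_{\e} \subset \{x : \nu(\varphi, x) > 0\}$, and under the hypothesis that this Lelong-number locus sits inside $Z$, we conclude $Z_{\e} \subset Z$ for every $\e$.

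\textbf{Anticipated obstacle.} The most delicate step is the simultaneous preservation of the monotonicity (b), the curvature bound (d), and the multiplier ideal equality (c) through the gluing: naive convex combinations or pointwise maxima for the $\psi_{m,\alpha}$ tend to destroy one of these properties. I expect to iterate the DPS01 construction together with a careful selection of the regularization parameters, invoking the openness of multiplier ideal sheaves to extract a genuinely decreasing subfamily; this bookkeeping, rather than any single conceptual step, will be the technical heart of the argument. The moreover statement itself is then a clean consequence of Skoda's theorem applied to the Bergman representation of $h_{\e}$, requiring no new analytic ingredient.
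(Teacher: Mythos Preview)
Your overall plan matches the paper's: properties (a)--(d) are taken from \cite{DPS01}, and the ``moreover'' assertion is obtained by checking that the local approximants are smooth at every point where $\nu(\varphi,\cdot)=0$. The paper's execution of this last step is slightly different and avoids two gaps in your argument.

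First, your Skoda step does not justify the sentence ``the constant function $1$ lies in $\mathcal{H}_{m,\alpha}$''. Skoda gives local integrability of $e^{-2m\varphi_{\alpha}}$ near $x_{0}$, not on all of $U_{\alpha}$; so $1\in\mathcal{H}_{m,\alpha}$ can fail. What you actually need is that the Bergman kernel is nonzero at $x_{0}$, i.e.\ that \emph{some} $f\in\mathcal{H}_{m,\alpha}$ has $f(x_{0})\neq 0$; this follows from Ohsawa--Takegoshi, or more cheaply from the inequality $\psi_{m,\alpha}\geq \varphi_{\alpha}-C_{m}$ (mean value/extremal characterization), which yields $\nu(\psi_{m,\alpha},x_{0})\leq\nu(\varphi_{\alpha},x_{0})=0$, and hence smoothness since $\psi_{m,\alpha}$ has analytic singularities. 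The paper simply cites this Lelong number comparison as inequality~(2.5) of \cite{DPS01}, bypassing Skoda entirely.

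Second, you do not explain why smoothness of the local $\psi_{m,\alpha}$ at $x_{0}$ survives the passage to the global metric $h_{\e}$. Your phrase ``gluing through the cocycle together with a partition of unity'' is not how \cite{DPS01} proceeds (a naive partition-of-unity sum would destroy both the curvature bound and the analytic-singularity structure). In \cite{DPS01} the global weight is obtained by Richberg's regularized maximum of the local $\psi_{m,\alpha}$; the paper explicitly notes this, and it is precisely what guarantees that $Z_{\e}$ is contained in the union of the local singular sets, hence in $Z$. Your ``anticipated obstacle'' paragraph also overcomplicates matters: the monotonicity (b) and the equisingularity (c) are already arranged in \cite{DPS01} without invoking the openness conjecture, so you should not need it here.
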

\begin{proof}
By applying \cite[Theorem 2.3]{DPS01} to the weight function $\varphi$ of $h$, 
we obtain quasi-psh functions 
$\varphi _{\nu}$ with equisingularities. 
If we take a sufficiently large $\nu=\nu(\e)$ for a given $\e>0$,
the metric $h_{\e}$ defined by $h_{\e}:=g e^{- 2 \varphi_{\nu(\e)}}$ 
satisfies properties (a), (b), (c), (d). 

The latter conclusion follows from the proof in \cite{DPS01}. 
We shortly see this fact 
by using the notation in \cite{DPS01}. 
In their proof, they locally approximate $\varphi$ by 
$\varphi_{\e, \nu, j}$ with logarithmic pole. 
From inequality (2.5) in \cite{DPS01}, 
the Lelong number of 
$\varphi_{\e, \nu, j}$ is 
less than or equal to that of $\varphi$. 
It follows that   
$\varphi_{\e, \nu, j}$ is smooth on $X \setminus Z$  
since $\varphi_{\e, \nu, j}$ has a logarithmic pole. 
Since $\varphi _{\nu}$ is obtained from 
Richberg's regularization of 
the supremum of 
these functions (see around (2.10)),   
we obtain the latter conclusion. 
\end{proof}

\subsection{The theory of harmonic integrals}\label{S2-3}
In this subsection, 
we recall the $L^2$-space of differential forms and the theory of harmonic integrals. 
Throughout this subsection, 
let $Y$ be a (not necessarily compact) 
complex manifold with a positive $(1,1)$-form $\ome$ 
and $E$ be a (holomorphic) line bundle  on $Y$ with a smooth metric $h$. 
In the proof of Theorem \ref{main}, 
the manifold $Y$ is a Zariski open set of $X$ and 
$E$ is the restriction of $F$ to $Y$.

For $E$-valued $(p,q)$-forms $u$ and $v$,  
the point-wise inner product 
$\langle u, v\rangle _{h, \ome}$
can be defined, and 
the (global) inner product 
$\lla u, v \rra  _{h, \ome}$ 
can also be defined by
\begin{equation*}
\lla u, v \rra  _{h, \ome}:=
\int_{Y} 
\langle u, v\rangle _{h, \ome}\, dV_{\ome}, 
\end{equation*}
where $dV_{\ome}:= \ome^{n}/n!$ and $n$ is the dimension of $Y$. 
The Chern connection $D_{h}$ on $E$ determined by the holomorphic structure and 
the hermitian metric $h$ can be written as 
$D_{h} = D'_{h} + \dbar$ with the $(1,0)$-connection $D'_{h}$ and the 
$(0,1)$-connection $\dbar$ (the $\dbar$-operator). 
The connections $D'_{h}$ and $\dbar$ 
are regarded as a densely defined closed operator on 
the $L^{2}$-space $L_{(2)}^{p, q}(Y, E)_{h, \ome}$ defined by 
\begin{equation*}
L_{(2)}^{p, q}(Y, E)_{h, \ome}:= 
\{u \mid u \text{ is an }E\text{-valued }(p, q)\text{-form with } 
\|u \|_{h, \ome}< \infty. \}. 
\end{equation*}
The formal adjoints $D'^{*}_{h}$ and $ \dbar^{*}_{h}$ agree with 
the Hilbert space adjoints in the sense of Von Neumann 
if $\ome$ is a \textit{complete} form on $Y$ 
(see \cite[(3.2) Theorem in ChapterV\hspace{-.1em}I\hspace{-.1em}I\hspace{-.1em}I]{Dem-book}). 
The following proposition can be obtained from   
the Bochner-Kodaira-Nakano identity and the density lemma.

\begin{prop}\label{Nak}
Let $ \ome$ be a complete K\"ahler form on $Y$ and 
$h$ be a smooth metric on $E$ such that  
$\sqrt{-1}\Theta_{h}(E) \geq  - C \ome$ for some constant $C>0$. 
Then, for every $u \in {\rm{Dom}}\, \dbar_{h}^{*} \cap {\rm{Dom}}\, \overline{\partial} 
\subset L_{(2)}^{n, q}(Y, E)_{h, \ome}$, 
the following equality holds\,$:$  
\begin{equation*}
\| \dbar_{h}^{*}u \|_{h, \ome}^{2} + 
\|\overline{\partial} u \|_{h, \ome}^{2} =  
\| D_{h}'^{*}u  \|_{h, \ome}^{2}+\lla \sqrt{-1}\Theta_{h}(E)\Lambda_{\ome} u, u
\rra_{h, \ome}.   
\end{equation*}   
Here $\Lambda_{\ome}$ 
denotes the adjoint of the wedge product $\ome \wedge \bullet$. 
\end{prop}

The following lemmas are proved by straightforward computations. 
For reader's convenience, we give a proof of Lemma \ref{rho}.

\begin{lemm}\label{cano}
Let $\omega$ and  $\ome$ be positive $(1,1)$-forms with $\ome \geq \omega$.  
If $u$ is an $(n,q)$-form, 
then $|u|^{2}_{\ome}\, dV_{\ome} \leq |u|^{2}_{\omega}\,  dV_{\omega} $. 
Further, if $u$ is an $(n,0)$-form, 
then $|u|^{2}_{\ome}\,  dV_{\ome} = |u|^{2}_{\omega}\,  dV_{\omega}$.
\end{lemm}

\begin{lemm}\label{rho}
Let $\omega$ be a positive $(1,1)$-form and $u$, $v$ be 
differential forms. 
\begin{itemize}
\item[(1)] There exists a positive 
constant $C$ $($depending only on the degree of $u$, $v$$)$ 
such that $|u \wedge v|_{\omega} \leq C |u|_{\omega}|v|_{\omega}$.
\item[(2)] If $\ome$ is a positive $(1,1)$-form  with $\ome \geq \omega$, 
then we have $|u|^2_{\ome} \leq |u|^2_{\omega}$. 
In particular, we have $|u \wedge v|_{\ome} \leq C |u|_{\ome}|v|_{\omega}$. 
\end{itemize}
\end{lemm}

\begin{proof}[Proof of Lemma \ref{rho}]
For a given point $x$, we choose  
a local coordinate $(z_{1}, z_{2}, \dots, z_{n})$ such that 
\begin{align*}
\omega = \frac{\sqrt{-1}}{2} \sum_{j=1}^{n} 
 dz_{j} \wedge d\overline{z_{j}}
 \quad \text{and} \quad 
\ome = \frac{\sqrt{-1}}{2} \sum_{j=1}^{n} 
\lambda_{j} dz_{j} \wedge d\overline{z_{j}} 
\quad  {\rm{ at}}\ x. 
\end{align*}
When the differential forms $u$ and $v$ are written as 
$u=\sum_{I, J}u_{I,J}dz_{I} \wedge d\overline{z}_{J}$ 
and 
$v=\sum_{K, L}v_{K,L}dz_{K} \wedge d\overline{z}_{L}$ 
in terms of this coordinate, 
it is easy to see that 
$$|u|^2_{\omega}=\sum_{I, J}|u_{I,J}|^{2} 
\quad \text{and} \quad  
|u|^2_{\ome}=\sum_{I, J}|u_{I,J}|^{2} 
\frac{1}{\prod_{(i,j) \in (I, J)} \lambda_{i} \lambda_{j}}
\quad  {\rm{ at}}\ x.$$ 
Here $I$, $J$, $K$, $L$  are ordered multi-indices 
and $dz_{I}:=dz_{i_{1}}\wedge dz_{i_{2}} \wedge \cdots \wedge dz_{i_{p}}$ 
for $I=\{ i_{1}<i_{2}< \cdots <i_p \}$. 
The second claim follows from $\lambda_{i} \geq 1$. 
The inequalities $|u_{I,J}| \leq |u|_{\omega}$ 
and $|v_{K,L}| \leq |v|_{\omega}$ yield 
\begin{align*}
|u \wedge v |_{\omega} 
& =
\big| \sum_{I, J, K, L} 
u_{I,J}\, v_{K,L}\, dz_{I} \wedge d\overline{z}_{J}
\wedge dz_{K} \wedge d\overline{z}_{L} \, \big|_{\omega} \\
& \leq \sum_{I, J, K, L} |u_{I,J}|  |v_{K,L}| \leq \sum_{I, J, K, L} 
 |u|_{\omega} |v|_{\omega}=C |u|_{\omega} |v|_{\omega}.   
\end{align*}
Here $C$ is a constant depending only on the degree of $u$, $v$.
\end{proof}

\subsection{Fr\'echet spaces}\label{S2-4}
In this subsection, for reader's convenience, 
we see that 
the following theorem leads to Proposition \ref{Fred}.

\begin{theo}[The open mapping theorem]
Let $\pi: D \to E$ be a linear map between Fr\'echet spaces 
$D$ and $E$. 
If $\pi$ is continuous and surjective, 
then $\pi$ is an open map. 
\end{theo}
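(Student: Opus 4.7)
The plan is the classical Baire category argument adapted to Fr\'echet spaces. Since $\pi$ is linear, openness reduces to showing that $\pi(V)$ contains a neighborhood of $0 \in E$ for every balanced neighborhood $V$ of $0 \in D$. First I would obtain the weaker statement that $\overline{\pi(V)}$ contains a neighborhood of $0$: by surjectivity, $E = \bigcup_{n \geq 1} \pi(nV) \subset \bigcup_{n \geq 1} n\,\overline{\pi(V)}$, and since $E$ is a complete metrizable topological vector space it is a Baire space, so some $n\,\overline{\pi(V)}$, and hence $\overline{\pi(V)}$ itself, must have non-empty interior. The relation $V - V \subset 2V$ together with a translation then shifts this interior point to the origin, yielding a neighborhood of $0 \in E$ inside $\overline{\pi(2V)}$.

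The central step is to remove the closure. I would fix a translation-invariant metric $d$ on $D$ defining its topology and select balanced neighborhoods $V_0 \supset V_1 \supset \cdots$ of $0 \in D$ satisfying $V_{n+1} + V_{n+1} \subset V_n$ and $V_0 \subset V$, along with neighborhoods $W_n \subset \overline{\pi(V_n)}$ of $0 \in E$ provided by the first step. Given $y \in W_0$, I would inductively produce $x_n \in V_n$ with $y - \pi(x_0 + \cdots + x_n) \in W_{n+1}$, exploiting the density of $\pi(V_{n+1})$ in $W_{n+1}$ at each stage. The partial sums are Cauchy with respect to $d$ by the nesting condition, and completeness of $D$ yields a limit $x$ with $\pi(x) = y$, where $x$ lies in a fixed multiple of $V$. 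This shows $\pi$ maps neighborhoods of $0 \in D$ onto neighborhoods of $0 \in E$.

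The main technical obstacle, though entirely classical, is arranging the nested sequences $\{V_n\}$ and $\{W_n\}$ compatibly so that the Cauchy condition holds \emph{and} the limit remains within a controlled multiple of the original $V$. This last requirement is precisely where the full Fr\'echet structure of $D$ (not merely the Baire category of $E$) is essential, and it is also the reason the argument does not extend to arbitrary continuous linear surjections between general topological vector spaces.
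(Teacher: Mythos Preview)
Your sketch is the classical Banach--Schauder argument and is correct in outline; the Baire category step followed by the successive-approximation scheme to remove the closure is exactly the standard route, and you have identified the right place where completeness of $D$ enters.

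However, there is nothing to compare against: the paper does not prove this theorem. It is stated without proof as a well-known fact from functional analysis (the open mapping theorem for Fr\'echet spaces), and is then immediately applied to derive Proposition~\ref{Fred}. So your proposal supplies a proof where the paper simply invokes the result. If anything, note that in the context of the paper a reference (e.g.\ to Rudin's \emph{Functional Analysis} or a similar standard source) would suffice in place of a full argument.
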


\begin{prop}\label{Fred}
Let $\pi: D \to E$ be a continuous linear map between Fr\'echet spaces $D$ and $E$.
If the cokernel of $\pi$ is finite dimensional, 
then the image ${\rm{Im}}\, \pi$ of $\pi$ is closed in $E$.
\end{prop}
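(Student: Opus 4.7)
The plan is to reduce to the open mapping theorem by enlarging the source with an algebraic complement of $\mathrm{Im}\,\pi$. Since $E/\mathrm{Im}\,\pi$ is finite dimensional by hypothesis, I would lift a basis of $E/\mathrm{Im}\,\pi$ to obtain a finite-dimensional subspace $W \subset E$ satisfying $E = \mathrm{Im}\,\pi \oplus W$ as vector spaces (direct sum in the purely algebraic sense). Being finite dimensional, $W$ is automatically a closed subspace of the Hausdorff space $E$, and $W$ carries its unique Hausdorff topological vector space structure, in which it is a Fr\'echet space.

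Next, I would form the continuous linear map
\[
\Phi : D \times W \longrightarrow E, \qquad \Phi(d, w) := \pi(d) + w.
\]
The product $D \times W$ is a Fr\'echet space, and $\Phi$ is surjective by the choice of $W$. Applying the open mapping theorem to $\Phi$ then gives that $\Phi$ is an open map; combined with continuity and surjectivity, this means $\Phi$ is a topological quotient map onto $E$, so a subset $S \subset E$ is closed in $E$ if and only if its preimage $\Phi^{-1}(S)$ is closed in $D \times W$.

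The direct-sum decomposition $E = \mathrm{Im}\,\pi \oplus W$ now forces
\[
\Phi^{-1}(\mathrm{Im}\,\pi) \;=\; D \times \{0\},
\]
because $\pi(d) + w \in \mathrm{Im}\,\pi$ together with $w \in W$ and $\mathrm{Im}\,\pi \cap W = \{0\}$ implies $w = 0$. Since $D \times \{0\}$ is manifestly closed in $D \times W$, the quotient-map property yields that $\mathrm{Im}\,\pi$ is closed in $E$.

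The main conceptual step is noticing that one should enlarge the source by $W$ rather than argue directly with $\pi$ (whose image is the very thing whose closedness is in question). Once this reduction is in place, the remaining ingredients---existence of an algebraic complement for a finite-dimensional quotient, automatic closedness of finite-dimensional subspaces of a Hausdorff TVS, and the fact that a continuous open surjection is a topological quotient map---are all standard. Accordingly I do not expect any genuine technical obstacle in implementing the plan.
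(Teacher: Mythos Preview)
Your proof is correct and follows essentially the same strategy as the paper: enlarge the source by a finite-dimensional algebraic complement $W$ of $\mathrm{Im}\,\pi$, form the surjection $D\times W\to E$, and invoke the open mapping theorem. The only minor difference is that the paper first reduces to the case where $\pi$ is injective (so that the enlarged map is bijective and one can use its continuous inverse composed with the second projection), whereas you bypass that reduction by using the quotient-map characterization of closed sets directly; your variant is slightly more streamlined but the key idea is identical.
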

\begin{proof}
We first consider the case where $\pi: D \to E$ is injective. 
We take a finite dimensional subspace $E_{1}$ of $E$ 
such that 
the quotient map $p:E_1 \rightarrow E/ {\rm{Im}}\, \pi$ 
is isomorphic, 
and  consider a continuous map
$\pi_1:  D \oplus E_1 \rightarrow E$ defined to be  
$\pi_{1}(d, e):=\pi(d)+e$ for 
every $(d, e) \in D \oplus E_1$.
Since $\pi_1$ is surjective (and injective) and continuous,  
the inverse map 
$\pi_1^{-1}: E \rightarrow  D \oplus E_1 $ is also continuous 
by the open mapping theorem.  
By composing $\pi_1^{-1}$ with the second projection  
$D \oplus E_1  \rightarrow E_1$,  we obtain 
the continuous map $\pi_2: E\rightarrow E_1$. 
It is easy to see that the kernel of $\pi_2$ 
agrees with the image of $\pi$, 
which implies that  the image of $\pi$ is closed.
When $\pi: D \to E$ is not injective, 
by considering the linear map $\overline{\pi}:D/ {\rm{Ker}}\,\pi \to E$,  
we can obtain the conclusion.  
\end{proof}

\section{Proof of the main result}\label{S3}
In this section, we give a proof of the  main result. 
The proof is based on a technical combination of 
the theory of harmonic integrals 
and the $L^{2}$-method for the $\dbar$-equation.
\begin{theo}[=Theorem \ref{main}]\label{main-main}
Let $F$ be a line bundle on a compact K\"ahler manifold $X$ and 
$h$ be a singular metric with 
semi-positive curvature on $F$. 
Then, for a $($non-zero$)$ section $s$ of a positive multiple $F^{m}$  
satisfying $\sup_{X} |s|_{h^{m}} < \infty$, 
the multiplication map 
\begin{equation*}
\Phi_{s} \colon H^{q}(X, K_{X} \otimes F \otimes \I{h}) 
\xrightarrow{\otimes s} 
H^{q}(X, K_{X} \otimes F^{m+1} \otimes \I{h^{m+1}})
\end{equation*}
is $($well-defined and$)$ injective for any $q$. 
\end{theo}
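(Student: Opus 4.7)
The plan is to carry out the strategy sketched in the introduction, combining Enoki's harmonic-forms method with equisingular regularization of $h$ and a new $L^{2}$-estimate for the $\dbar$-equation obtained via a Čech-theoretic detour.

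To start, fix a class $\alpha \in H^{q}(X, K_{X}\otimes F \otimes \I{h})$ with $\Phi_{s}(\alpha)=0$. Apply Theorem \ref{equi} to produce a decreasing family $\{h_{\e}\}_{\e>0}$ of metrics on $F$, each smooth on $Y_{\e}:= X\setminus Z_{\e}$, with $\sqrt{-1}\Theta_{h_{\e}}(F)\geq -\e\omega$ and $\I{h_{\e}}=\I{h}$. Equip $Y_{\e}$ with a complete Kähler form $\ome_{\e}$ built by adding to $\omega$ a small multiple of $\ddbar\psi_{\e}$ for an exhaustion function $\psi_{\e}$ with logarithmic poles along $Z_{\e}$. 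Standard Dolbeault-isomorphism arguments combined with $\I{h_{\e}} = \I{h}$ then allow $\alpha$ to be represented by an $L^{2}$ harmonic form $u_{\e}$ on $Y_{\e}$, i.e.\ an $F$-valued $(n,q)$-form with $\dbar u_{\e} = D^{''*}_{h_{\e}} u_{\e} = 0$ and $\|u_{\e}\|_{h_{\e}, \ome_{\e}}$ uniformly bounded.

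The first asymptotic step is to show $\|D^{''*}_{h_{\e}^{m+1}}s u_{\e}\|_{h_{\e}^{m+1}, \ome_{\e}} \to 0$. Applying Nakano's identity (Proposition \ref{Nak}) to $s u_{\e}$ with respect to $(h_{\e}^{m+1}, \ome_{\e})$, using $\dbar(s u_{\e})=0$, and comparing with the vanishing Bochner-Kodaira identity for $u_{\e}$ on $(F, h_{\e})$, the curvature difference is governed by $\sqrt{-1}\Theta_{h^{m}_{\e}}(F^{m})$ paired with the pointwise $h^{m}_{\e}$-norm of $s$. Since $|s|_{h_{\e}^{m}} \leq \sup_{X}|s|_{h^{m}}<\infty$ by $h_{\e}\leq h$ and $\sqrt{-1}\Theta_{h_{\e}^{m}}(F^{m})\geq -m\e\omega$, the right-hand side of the Bochner identity is $O(\e)\|u_{\e}\|^{2}$, giving the claim.

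The heart of the proof — and the main technical obstacle — is to construct solutions $\gamma_{\e}$ of $\dbar\gamma_{\e} = s u_{\e}$ with $\|\gamma_{\e}\|_{h_{\e}^{m+1}, \ome_{\e}}$ bounded independently of $\e$. Standard Hörmander estimates would give bounds blowing up like $1/\e$ because the curvature of $h_{\e}^{m+1}$ is only bounded below by $-(m+1)\e\omega$, so they are insufficient. Following the outline announced in the introduction, I would fix a Stein covering $\{U_{j}\}$ of $X$, interpret $s u_{\e}$ as a Čech cocycle with values in $K_{X}\otimes F^{m+1}\otimes \I{h^{m+1}}$ (via Dolbeault-to-Čech comparison; here it matters that $u_{\e}$ extends across $Z_{\e}$ thanks to $\I{h_{\e}}=\I{h}$, so the resulting class genuinely lives on $X$), and invoke $\Phi_{s}(\alpha)=0$ to conclude this cocycle is a coboundary. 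Topologising the Čech cochains with their local $L^{2}$-norms yields Fréchet spaces, and the open mapping theorem together with Proposition \ref{Fred} produces a primitive cochain whose local $L^{2}$-norm is controlled by that of $s u_{\e}$, which is uniformly bounded. Converting back to a Dolbeault primitive $\gamma_{\e}$ by a partition-of-unity construction delivers the required uniform bound.

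Putting the ingredients together,
\begin{equation*}
\|s u_{\e}\|^{2} = \lla s u_{\e}, \dbar \gamma_{\e}\rra = \lla D^{''*}_{h_{\e}^{m+1}}s u_{\e}, \gamma_{\e}\rra \leq \|D^{''*}_{h_{\e}^{m+1}} s u_{\e}\|\,\|\gamma_{\e}\| \longrightarrow 0,
\end{equation*}
so $s u_{\e} \to 0$ in $L^{2}$. Since $|u_{\e}|^{2}_{h_{\e}} = |s u_{\e}|^{2}_{h_{\e}^{m+1}}/|s|^{2}_{h_{\e}^{m}}$ pointwise on $\{s\neq 0\}$, extracting a weak limit $u_{0}$ of $\{u_{\e}\}$ produces a form representing $\alpha$ which vanishes off the zero locus of $s$, hence identically, so $\alpha=0$. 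The hardest step is clearly the uniform bound on $\|\gamma_{\e}\|$ via the Čech machinery of Section 4, which is exactly where transcendental singularities of $h$ must be bypassed and where the proof departs decisively from the arguments of \cite{Eno90} and its successors.
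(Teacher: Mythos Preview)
Your outline tracks the paper's architecture, but the Čech step as you describe it has a genuine gap, and a secondary setup choice creates trouble at the end.

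\textbf{The Čech step.} You propose to push $su_{\e}$ through the Dolbeault--Čech comparison and land in $C^{\bullet}(\mathcal{U}, K_{X}\otimes F^{m+1}\otimes \I{h^{m+1}})$, then use the open mapping theorem there. But $su_{\e}$ is only $L^{2}$ with respect to $h_{\e}^{m+1}$, not $h^{m+1}$: we have no control on $\|u_{\e}\|_{h}$, so the local $\dbar$-primitives produced by the comparison give a cocycle in $C^{\bullet}(\mathcal{U}, K_{X}\otimes F^{m+1}\otimes \I{h_{\e}^{m+1}})$. The equisingular approximation guarantees $\I{h_{\e}}=\I{h}$ but says nothing about $\I{h_{\e}^{m+1}}$ versus $\I{h^{m+1}}$, so this Fr\'echet space genuinely depends on $\e$ and the open-mapping argument cannot produce $\e$-uniform primitives. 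The paper avoids this by applying the Čech machinery not to $su_{\e}$ but to $u-u_{\e}$, where $u$ is a \emph{fixed} representative of $\alpha$ with $\|u\|_{h,\omega}<\infty$ and $u=u_{\e}+\dbar v_{\e}$ is the Hodge decomposition. One then solves $\dbar\alpha_{\e}=u-u_{\e}$ with $\|\alpha_{\e}\|_{h_{\e},\ome}$ uniformly bounded, working entirely in the $\e$-independent space $C^{\bullet}(\mathcal{U}, K_{X}\otimes F\otimes \I{h})$; only afterwards does one set $\gamma_{\e}:=-s\alpha_{\e}+\gamma$, where $\gamma$ is a single fixed solution of $\dbar\gamma=su$ supplied by the hypothesis $\Phi_{s}(\alpha)=0$. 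This decoupling of the assumption $\Phi_{s}(\alpha)=0$ (used once, for $\gamma$) from the $\e$-family (handled in $\I{h}$) is the point you are missing.

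\textbf{The varying domain.} You work on $Y_{\e}=X\setminus Z_{\e}$ with a complete form $\ome_{\e}$ depending on $\e$. Then the $u_{\e}$ live in different Hilbert spaces and ``extracting a weak limit $u_{0}$'' is not well-posed; nor is it clear that such a limit would represent $\alpha$. The paper uses the hypothesis $\sup_{X}|s|_{h^{m}}<\infty$ to observe that $\{x:\nu(\varphi,x)>0\}\subset\{s=0\}=:Z$, and then invokes the refined form of Theorem~\ref{equi} to arrange $Z_{\e}\subset Z$ for every $\e$. One then works on the fixed open set $Y=X\setminus Z$ with a single complete K\"ahler form $\ome$, so that for any fixed $\e_{0}$ all $u_{\e}$ with $\e<\e_{0}$ lie in $L^{n,q}_{(2)}(Y,F)_{h_{\e_{0}},\ome}$. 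The conclusion is then that $\dbar v_{\e}=u-u_{\e}\to u$ weakly in this fixed space, and since ${\rm Im}\,\dbar$ is closed there, $u\in{\rm Im}\,\dbar$.
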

\begin{proof}[Proof of Theorem \ref{main-main}]
The case $q=0$ is obvious, 
and thus we assume $q>0$. 
The proof can be divided into four steps. 
In Step \ref{St1}, we approximate a given singular metric $h$ by singular metrics 
$\{h_{\e} \}_{\e>0}$ that are smooth on a Zariski open set. 
In this step, we fix the notation to apply the theory of 
harmonic integrals and explain the sketch of the proof. 
For a given cohomology class in $H^{q}(X, K_{X} \otimes F \otimes \I{h})$ 
that goes to zero in $ H^{q}(X, K_{X} \otimes F^{m+1} \otimes \I{h^{m+1}})$ 
by $\Phi_{s}$, 
we take the associated harmonic form $u_{\e}$ 
with respect to $h_{\e}$. 
In Step \ref{St2}, we study the asymptotic behavior of 
$u_{\e}$ and $s u_{\e}$ as $\e$ tends to zero. 
In Step \ref{St3}, we construct a suitable solution $v_{\e}$ of 
the $\dbar$-equation $\dbar v_{\e} = s u_{\e}$. 
In Step \ref{St4}, we show that $u_{\e}$ converges to zero in a suitable sense.

\begin{step}[Equisingular approximation of $h$]\label{St1}
Throughout the proof, let $\omega$ be a K\"ahler form on $X$. 
For the proof, 
we want to apply the theory of harmonic integrals, 
but a given singular metric $h$ may not be smooth. 
For this reason, we approximate $h$ by singular metrics 
$\{ h_{\e} \}_{\e>0}$ that are smooth on a Zariski open set. 
By Theorem \ref{equi}, we obtain  singular 
metrics $\{ h_{\e} \}_{\e>0}$ on $F$ with the following properties\,$:$
\begin{itemize}
\item[(a)] $h_{\e}$ is smooth on $X \setminus Z_{\e}$, where $Z_{\e}$ is a subvariety on $X$.
\item[(b)]$h_{\e_{2}} \leq h_{\e_{1}} \leq h$ holds for any 
$0< \e_{1} \leq \e_{2} $.
\item[(c)]$\I{h}= \I{h_{\e}}$.
\item[(d)]$\sqrt{-1} \Theta_{h_{\e}}(F) \geq -\e \omega$. 
\end{itemize}
For the weight function $\varphi$ 
(resp. $\varphi_{\e}$) of the singular metric $h$ (resp. $h_{\e}$) with 
respect to a smooth metric $g$, 
we may assume $\varphi_{\e} \leq 0$ by adding a constant, 
since $\varphi_{\e}$ is bounded above on $X$. 
Hence we have 
\begin{equation*}
g \leq h_{\e}=g e^{-2\varphi_{\e}} \leq h=g e^{-2\varphi}.
\end{equation*}
Since the point-wise norm $|s|_{h^{m}}$ is bounded on $X$, 
there exists a constant $C$ such that 
$\log |s| \leq m \varphi + C$, 
where $s$ is locally regarded as a holomorphic function under a local trivialization of $F$. 
It implies that the Lelong number of $m \varphi$ is less than or equal to 
that of $\log |s|$. 
In particular, the set 
$\{x \in X \mid \nu(h, x)>0 \}$ 
is contained in the subvariety $Z$ defined by 
$Z:=\{x \in X \mid s(x)=0 \}$,  
and thus  
we may assume a stronger property than property (a), namely
\begin{itemize}
\item[(e)] $h_{\e}$ is smooth on $Y:=X \setminus Z$, 
where $Z:=\{x \in X \mid s(x)=0 \}$. 
\end{itemize}

Now we construct a \lq \lq complete" K\"ahler form on $Y$ 
with suitable potential function. 
Take a quasi-psh function $\psi$ on $X$ such that 
$\psi$ has a logarithmic pole along $Z$ 
and $\psi$ is smooth on $Y$.
Since the function $\psi$ is bounded above on $X$, 
we may assume $\psi \leq -e$ by adding a constant.
We define the $(1,1)$-form $\ome$ on $Y$ by 
\begin{equation*}
\ome:= \ell \omega + \deldel \Psi, 
\end{equation*}
where 
$\ell$ is a positive number and $\Psi:=1/\log(-\psi)$. 
Then we can show that    
the $(1,1)$-form $\ome$ satisfies the following properties 
for a sufficiently large $\ell >0$\,$:$ 
\begin{itemize}
\item[(A)] $\ome$ is a complete K\"ahler form on $Y$.
\item[(B)] $\Psi$ is bounded on $X$. 
\item[(C)] $\ome \geq \omega $.
\end{itemize} 
Indeed, properties (B), (C) follow   
from the definition of $\Psi$, $\ome$ and property (A) follows 
from straightforward computations.     
See \cite[Lemma 3.1]{Fuj12-A} for the precise proof of property (A). 
In the proof, 
we mainly consider $F$-valued differential forms on $Y$ (not $X$) and 
the $L^2$-norm with respect to $h_{\e}$ and $\ome$ (not $h$ and $\omega$).

Let $L_{(2)}^{n, q}(Y, F)_{h_{\e}, \ome}$ be the $L^2$-space of $F$-valued $(n,q)$-forms 
$u$ on $Y$ with respect to the inner product $\|\bullet \|_{h_\e, \ome}$
 defined by 
\begin{equation*}
\|u \|^{2}_{h_\e, \ome}:= \int_{Y} 
|u |^{2}_{h_{\e}, \ome}\, dV_{\ome}. 
\end{equation*}
Then, by Proposition \ref{closed}, 
we obtain the following orthogonal decomposition\,$:$ 
\begin{equation*}
L_{(2)}^{n, q}(Y, F)_{h_{\e}, \ome}
=
{\rm{Im}}\,\dbar
\oplus
\mathcal{H}_{h_{\e}, \ome}^{n, q}(F)
\oplus {\rm{Im}}\, \dbar^{*}_{h_{\e}}.  
\end{equation*}
As explained in subsection \ref{S2-3}, 
the operator $\dbar^{*}_{h_{\e}}$ (resp. $D'^{*}_{h_{\e}}$) 
denotes the formal adjoint of the densely defined closed operator   
$\dbar$ (resp. $D'_{h_{\e}}$), and they agree with 
the Hilbert space adjoints since $\ome$ is complete. 
(See \cite[\S3 Chapter V\hspace{-.1em}I\hspace{-.1em}I\hspace{-.1em}I]{Dem-book}
for a comparison of the formal adjoint and the Hilbert space adjoint.) 
Strictly speaking, the $\dbar$-operator also depends on $h_{\e}$, $\ome$ 
since the domain and range of $\dbar$ depend on them. 
We will write it simply $\dbar$ when no confusion can arise.
Here $\mathcal{H}_{h_{\e}, \ome}^{n, q}(F)$ is  
the space of harmonic forms with respect to 
$h_{\e}$ and $\ome$, namely 
\begin{equation*}
\mathcal{H}_{h_{\e}, \ome}^{n, q}(F)= 
\{u   \mid u  
\text{ is an } F\text{-valued } (n,q)\text{-form with  }
\dbar u=0 \text{ and } \dbar^{*}_{h_{\e}}u=0.    \}. 
\end{equation*}
A harmonic form in $ \mathcal{H}_{h_{\e}, \ome}^{n, q}(F)$ 
is smooth by elliptic regularity
(for example see 
\cite[(3.2) Theorem Chapter V\hspace{-.1em}I\hspace{-.1em}I\hspace{-.1em}I]{Dem-book}). 
These results seem to be known to specialists.  
The precise proof for them can be found in 
\cite{Dem-book}, \cite[Claim 1]{Fuj12-A}, and Section \ref{S5}.

It follows that  
$|u|^{2}_{h_{\e}, \ome}\, dV_{\ome} \leq 
|u|^{2}_{h_{\e}, \omega}\, dV_{\omega}$
for an $F$-valued $(n,q)$-form $u$
from Lemma \ref{cano} and property (C), 
which leads to the inequality $\|u \|_{h_{\e}, \ome}\leq 
\|u \|_{h_{\e}, \omega}$. 
From this inequality and property (b) of $h_{\e}$,
we obtain 
\begin{equation}\label{ine}
\|u \|_{h_{\e}, \ome} \leq 
\|u \|_{h_{\e}, \omega} \leq 
\|u \|_{h, \omega}
\end{equation}
for an $F$-valued $(n,q)$-form $u$. 
These inequalities play a crucial role in the proof. 
In this paper  
$\|\bullet \|_{\ome}$ denotes the $L^2$-norm on $Y$ (not $X$) 
and $\|\bullet \|_{\omega}$ denotes the $L^2$-norm on $X$ (not $Y$) 
if otherwise mentioned. 
Strictly speaking $\|u \|_{h_{\e}, \ome}$ is the norm of the restriction of $u$ to $Y$, 
but we will omit the notation of the restriction.

For the $L^2$-space $L^{n,q}_{(2)}(X, F)_{h, \omega}$ 
of $F$-valued $(n,q)$-forms on $X$ with respect to the inner product $\|\bullet \|_{h, \omega}$,  
we have the standard De Rham-Weil isomorphism 
$$
H^{q}(X, K_{X} \otimes F \otimes \I{h}) \cong 
\frac{{\rm{Ker}}\, \dbar}{{\rm{Im}}\, \dbar} \text{ of } 
L^{n,q}_{(2)}(X, F)_{h, \omega}, 
$$
where the right hand side is the $\dbar$-cohomology group 
defined by the closed operator $\dbar$ between 
$L^2$-spaces $L^{n,\bullet}_{(2)}(X, F)_{h, \omega}$. 
By this isomorphism, 
we can represent a given cohomology class  
by an $F$-valued 
$(n, q)$-form $u$ with $\|u \|_{h, \omega} < \infty$. 
In order to prove that 
the multiplication map $\Phi_{s}$ is injective, 
we assume that the cohomology class of $su$ 
is zero in 
$H^{q}(X, K_{X}\otimes F^{m+1}\otimes \I{h^{m+1}})$. 
Our final goal is to show that 
the cohomology class of $u$ is actually zero, 
that is, $u \in {\rm{Im}}\, \dbar \subset L^{n,q}_{(2)}(X, F)_{h, \omega}$.

It follows that 
$u \in L_{(2)}^{n, q}(Y, F)_{h_{\e}, \ome}$ for every $\e > 0$
from inequality (\ref{ine}). 
By the above orthogonal decomposition, 
there exist  
$u_{\e} \in \mathcal{H}_{h_{\e}, \ome}^{n, q}(F)$ and 
$w_{\e} \in {\rm{Dom}}\, \dbar \subset  L_{(2)}^{n,q-1}(Y, F)_{h_{\e}, \ome}$ such that 
\begin{equation*}
u=\dbar w_{\e} + u_{\e}. 
\end{equation*}
Note that  
the component of ${\rm{Im}}\, \dbar^{*}_{h_{\e}}$ is zero 
since $u$ is $\dbar$-closed.

At the end of this step, we explain  
the strategy of the proof. 
In Step \ref{St2}, we show that 
$\| \dbar^{*}_{h^{m+1}_{\e}} s u_{\e} \|_{h^{m+1}_{\e}, \ome}$ 
converges to zero 
as  $\e$ tends to zero. 
We have already known that there is a solution $v_{\e}$ of the $\dbar$-equation 
$\dbar v_{\e} = s u_{\e}$ since 
the cohomology class of $su$ is assumed to be zero. 
However, for our goal, we need $L^{2}$-estimates for $v_{\e}$. 
In Step \ref{St3}, 
we construct a solution $v_{\e}$ of the $\dbar$-equation   
$\dbar v_{\e} = s u_{\e}$ such that  
the norm $\| v_{\e} \|_{h^{m+1}_{\e}, \ome}$ 
is uniformly bounded. 
By Step \ref{St2} and Step \ref{St3}, we can obtain that 
\begin{equation*}
\|su_{\e} \|^{2}_{h^{m+1}_{\e}, \ome} \leq 
\| \dbar^{*}_{h^{m+1}_{\e}} s u_{\e} \|_{h^{m+1}_{\e}, \ome}
\| v_{\e} \|_{h^{m+1}_{\e}, \ome}\to 0 \text{\quad as } \e \to 0.
\end{equation*}
In Step \ref{St4}, from this convergence, we prove that 
$u_{\e}$ converges to zero  
in a suitable sense and this completes the proof. 
\end{step} 
\begin{step}[A generalization of Enoki's argument for the injectivity theorem]\label{St2}
The aim of this step is to prove the following proposition, 
which can be seen as a generalization of Enoki's proof of Theorem \ref{Eno}. 

\begin{prop} \label{D''}
As $\e$ tends to zero, the norm 
$\| \dbar^{*}_{h^{m+1}_{\e}} s u_{\e} \|_{h^{m+1}_{\e}, \ome}$ converges  
to zero. 
\end{prop}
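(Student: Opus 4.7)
The plan is to generalize Enoki's Bochner-Nakano argument for semi-positive smooth metrics to our singular setting, carefully tracking the $O(\e)$ errors that appear because $h_\e$ has curvature only bounded below by $-\e\omega$. The key tool is the Nakano identity of Proposition \ref{Nak}, applied on the complete K\"ahler manifold $(Y,\ome)$ twice: once to the harmonic form $u_\e$, and once to $su_\e$.

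First, applying the identity to $u_\e$, the harmonicity relations $\dbar u_\e = D^{''*}_{h_\e}u_\e = 0$ reduce it to
\[
0 = \lla\sqrt{-1}\Theta_{h_\e}(F)\Lambda_{\ome} u_\e, u_\e\rra_{h_\e,\ome} + \|D^{'*}_{h_\e}u_\e\|^2_{h_\e,\ome}.
\]
By property (d) and pointwise Nakano positivity on $(n,q)$-forms, the shifted integrand $\langle(\sqrt{-1}\Theta_{h_\e}(F)+\e\omega)\Lambda_{\ome} u_\e, u_\e\rangle$ is non-negative at each point. Combined with the uniform bound $\|u_\e\|_{h_\e,\ome}\leq\|u\|_{h,\omega}$ (from orthogonality of the Hodge decomposition and (\ref{ine})), this yields
\[
\|D^{'*}_{h_\e}u_\e\|^2_{h_\e,\ome}\leq C\e \quad\text{and}\quad 0\leq \int_Y \langle(\sqrt{-1}\Theta_{h_\e}(F)+\e\omega)\Lambda_{\ome}u_\e, u_\e\rangle\,\ome^n \leq C\e,
\]
for a constant $C$ independent of $\e$.

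Next, applying the identity to $su_\e$ with metric $h_\e^{m+1}$, the holomorphicity of $s$ gives $\dbar(su_\e)=0$, so
\[
\|D^{''*}_{h_\e^{m+1}}su_\e\|^2 = \lla\sqrt{-1}\Theta_{h_\e^{m+1}}(F^{m+1})\Lambda_{\ome}su_\e, su_\e\rra + \|D^{'*}_{h_\e^{m+1}}su_\e\|^2.
\]
The curvature integrand factors pointwise as $(m+1)|s|^2_{h_\e^m}\langle\sqrt{-1}\Theta_{h_\e}(F)\Lambda_{\ome}u_\e, u_\e\rangle$. Using the uniform bound $|s|^2_{h_\e^m}\leq|s|^2_{h^m}\leq M:=\sup_X|s|^2_{h^m}$ (from monotonicity (b) and the hypothesis $\sup_X|s|_{h^m}<\infty$) and splitting the integrand via the shifted form from the previous step, the curvature term is $O(\e)$.

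It remains to estimate $\|D^{'*}_{h_\e^{m+1}}(su_\e)\|^2$. Working in a local holomorphic trivialization with $h_\e=e^{-\varphi_\e}$ and $s$ a holomorphic function, a Leibniz-type computation relates $D^{'*}_{h_\e^{m+1}}(su_\e)$ to $s\cdot D^{'*}_{h_\e}(u_\e)$ up to a correction term arising because the Chern $(1,0)$-connection on $F^{m+1}$ carries an extra weight of $m\partial\varphi_\e$ compared with that on $F$. The main obstacle is that this correction, taken at face value, could blow up as $\e\to 0$ since $\varphi_\e$ is not smooth on all of $X$; the key insight (the ``new ingredient'' advertised in the introduction) is that, when combined with the harmonicity of $u_\e$ and the $O(\e)$-control of the shifted curvature integrand from the first step, the correction can be absorbed into an $O(\e)$ bound. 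Summing the two contributions yields $\|D^{''*}_{h_\e^{m+1}}su_\e\|^2 \leq C\e\to 0$, as required. The completeness of $\ome$ (property (A)) is used throughout to ensure that the formal adjoints coincide with the Hilbert-space adjoints, so that Proposition \ref{Nak} is applicable on the non-compact manifold $(Y,\ome)$.
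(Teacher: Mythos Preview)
Your overall strategy---applying Proposition \ref{Nak} first to $u_\e$ to extract the $O(\e)$ bounds on $\|D^{'*}_{h_\e}u_\e\|^2$ and on the curvature integral, then to $su_\e$---is exactly the paper's approach, and your treatment of the curvature term via the ``shifted'' integrand is equivalent to the paper's splitting into positive and negative parts $A_\e^{+}$, $A_\e^{-}$.

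The gap is in your handling of $\|D^{'*}_{h_\e^{m+1}}(su_\e)\|^2$. You assert that a Leibniz computation produces a correction term involving $m\,\partial\varphi_\e$, and then wave your hands about absorbing it. In fact there is \emph{no} correction term: on a K\"ahler manifold the identity $D^{'*}=i[\Lambda_{\ome},\dbar]$ expresses $D^{'*}$ entirely through $\dbar$ and $\Lambda_{\ome}$, neither of which depends on the fibre metric. Since $s$ is a holomorphic $(0,0)$-form, both $\dbar$ and $\Lambda_{\ome}$ commute with multiplication by $s$, and one gets the exact identity
\[
D^{'*}_{h_\e^{m+1}}(su_\e)=s\,D^{'*}_{h_\e}u_\e.
\]
This is precisely what the paper uses. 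From here the estimate is immediate:
\[
\|D^{'*}_{h_\e^{m+1}}(su_\e)\|^2_{h_\e^{m+1},\ome}\leq \sup_X|s|^2_{h^m}\,\|D^{'*}_{h_\e}u_\e\|^2_{h_\e,\ome}\leq M\cdot C\e.
\]
You were confusing $D'$ (which does pick up the extra $m\,\partial\varphi_\e$) with its adjoint $D^{'*}$ (which does not). The phantom correction you introduce would indeed be problematic---it would involve $|\partial\varphi_\e|$, which is not controlled near $Z$---so the hand-waving there is not just incomplete but would not work. Fortunately it is unnecessary: once you invoke the K\"ahler identity, this term is the easiest of the two, not the hardest.
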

\begin{proof}[Proof of Proposition \ref{D''}]
The key to prove the proposition is 
the following inequalities\,$:$ 
\begin{equation}\label{ine2}
\|u_{\e} \|_{h_{\e}, \ome} 
\leq \|u \|_{h_{\e}, \ome} 
\leq \|u \|_{h, \omega}<\infty.  
\end{equation}
The first inequality follows from the definition of  $u_{\e}$ 
and  the second inequality follows from 
inequality (\ref{ine}). 
The important point here is that the right hand side is independent of $\e$. 
By applying Proposition \ref{Nak} to $u_{\e}$, 
we obtain 
\begin{align}\label{B-eq}
0 = \lla \sqrt{-1}\Theta_{h_{\e}}(F)
\Lambda_{\ome} u_{\e}, u_{\e}
  \rra_{h_{\e}, \ome} + \|D'^{*}_{h_{\e}}u_{\e} \|^{2}_{h_{\e}, \ome} 
\end{align}
since $u_{\e}$ is harmonic with respect to $h_{\e}$ and $\ome$.
Let $A_{\e}$ be the first term and $B_{\e}$ be 
the second term of the right hand side of equality (\ref{B-eq}). 
We first show that the first term $A_{\e}$ and 
the second term $B_{\e}$ converge to zero. 
Let $g_{\e}$ be the integrand of $A_{\e}$, 
that is, 
\begin{equation*}
g_{\e}:= \langle  \sqrt{-1}\Theta_{h_{\e}}(F)
\Lambda_{\ome} u_{\e}, u_{\e}
 \rangle_{h_{\e}, \ome}. 
\end{equation*}
Then there exists a constant $C>0$ (independent of $\e$) 
such that
\begin{equation}\label{ine3}
g_{\e} \geq -\e C |u_{\e}|^{2}_{h_{\e}, \ome}. 
\end{equation}
This inequality follows from simple computations. 
Indeed, let $\lambda_{1}^{\e} \leq \lambda_{2}^{\e} \leq 
\dots \leq \lambda_{n}^{\e} $ be the 
eigenvalues of $\sqrt{-1}\Theta_{h_{\e}}(F)$ with respect to 
$\ome$. 
For every point $y \in Y$, there exists 
a local coordinate $(z_{1}, z_{2}, \dots, z_{n})$ 
centered at $y$ such that 
\begin{align*}
\sqrt{-1}\Theta_{h_{\e}}(F) =\frac{\sqrt{-1}}{2} \sum_{j=1}^{n} 
\lambda_{j}^{\e} dz_{j} \wedge d\overline{z_{j}}\quad \text{and} \quad 
\ome = \frac{\sqrt{-1}}{2} \sum_{j=1}^{n} 
 dz_{j} \wedge d\overline{z_{j}}
\quad {\rm{ at}}\ y. 
\end{align*}
When we locally write $u_{\e}$ as 
$u_{\e} =\sum_{|K|=q} u_{K}^{\e}\ dz_{1}\wedge \dots \wedge dz_{n} 
\wedge d\overline{z}_{K}$, 
we can easily see 
\begin{equation*}
g_{\e}= \sum_{|K|=q} 
\Big{(} \sum_{j \in K} \lambda_{j}^{\e} \Big{)} 
|u_{K}^{\e}|^{2}_{h_{\e}} 
\end{equation*}
by straightforward computations.  
On the other hand, from 
property (C) of $\ome$ and property (d) of $h_{\e}$, 
we have
$\sqrt{-1}\Theta_{h_{\e}}(F) 
\geq -\e \omega 
\geq -\e \ome$. 
It leads to $\lambda_{j}^{\e} \geq -\e$, 
and thus we obtain inequality (\ref{ine3}). 
From inequality (\ref{ine3}) and equality (\ref{B-eq}), 
we have  
\begin{align*}
0 \geq A_{\e} &= \int_{Y} g_{\e}\, dV_{\ome} \geq -\e C \int_{Y} |u_{\e}|^{2}_{h_{\e}, \ome}\, dV_{\ome} \geq -\e C \|u \|^{2}_{h, \omega}. 
\end{align*}
The last inequality follows from inequality (\ref{ine2}). 
Therefore $A_{\e}$ converges to zero. 
Further it follows that $B_{\e}$ also converges to zero 
from equality (\ref{B-eq}).

To apply Proposition \ref{Nak} to $su_{\e}$, 
we need to prove that $su_{\e} \in L_{(2)}^{n,q}(Y, F^{m+1}
)_{h^{m+1}_{\e}, \ome}$ and $su_{\e} \in {\rm{Dom}}\, \dbar^{*}_{h_{\e}^{m+1}}$. 
It can be proven that $su_{\e} \in {\rm{Dom}}\, \dbar^{*}_{h_{\e}^{m+1}}$ 
from \cite[Proposition\,2.2]{Mat15a}.
By the assumption, 
the point-wise norm $|s|_{h^{m}}$ with respect to $h^{m}$ is bounded. 
Further we have $|s|_{h_{\e}^{m}} \leq |s|_{h^{m}}$
from property (b) of $h_{\e}$. 
They imply   
\begin{equation*}
\|s u_{\e} \|_{h_{\e}^{m+1}, \ome} \leq 
\sup_{X} |s|_{h_{\e}^{m}}  \|u_{\e} \|_{h_{\e}, \ome}  \leq
\sup_{X} |s|_{h^{m}}  \|u \|_{h, \omega} < \infty. 
\end{equation*}
Hence we know $su_{\e} \in L_{(2)}^{n,q}(Y, F^{m+1}
)_{h^{m+1}_{\e}, \ome}$. 
Note that the right hand side is independent of $\e$. 
By applying Proposition \ref{Nak} to $su_{\e}$, 
we obtain 
\begin{align}\label{B-eq2}
&\| \dbar^{*}_{h_{\e}^{m+1}}
su_{\e} \|^{2}_{h^{m+1}_{\e}, \ome} \\
=& \lla \sqrt{-1}\Theta_{h^{m+1}_{\e}}(F^{m+1})\notag
\Lambda_{\ome} su_{\e}, su_{\e}
  \rra_{h^{m+1}_{\e}, \ome} +
\|D'^{*}_{h_{\e}^{m+1}}su_{\e} \|^{2}_{h^{m+1}_{\e}, \ome}.
\end{align}
Here we used $\dbar s u_{\e}=s \dbar u_{\e}=0$. 
From now on, we prove that  
the second term of the right hand side converges to zero. 
It is easy to see that  
$D'^{*}_{h^{m+1}_{\e}}su_{\e} =
s D'^{*}_{h_{\e}}u_{\e}$ holds  
since $s$ is a holomorphic section and 
$D'^{*}=-*\dbar*$, 
where $*$ is the Hodge star operator with respect to $\ome$. 
Therefore we have    
\begin{equation*}
\|D'^{*}_{h_{\e}^{m+1}}su_{\e} \|^{2}_{h^{m+1}_{\e}, \ome} \leq
\sup_{X}|s|^{2}_{h_{\e}^{m}}  \int_{Y}
|D'^{*}_{h_{\e}}u_{\e} |^{2}_{h_{\e}, \ome}\, dV_{\ome} 
\leq \sup_{X}|s|^{2}_{h^{m}} B_{\e}. 
\end{equation*}
Since $|s|^{2}_{h^{m}}$ is bounded and 
$B_{\e}$ converges to zero, 
the second term 
$\|D'^{*}_{h^{m+1}_{\e}}su_{\e} \|^2_{h^{m+1}_{\e}, \ome}$ converges to zero.

For the proof of the proposition, 
it remains to show that 
the first term of the right hand side of equality (\ref{B-eq2})
converges to zero. Now we investigate $A_{\e}$ in detail. 
By the definition of $A_{\e}$, we have
\begin{equation*}
A_{\e}= \int_{\{ g_{\e} \geq 0 \}} g_{\e}\, dV_{\ome} + 
\int_{\{ g_{\e} \leq 0 \}} g_{\e}\, dV_{\ome}. 
\end{equation*}
Let $A^{+}_{\e}$ be the first term and 
$A^{-}_{\e}$ be the second term of the right hand side. 
Then inequalities (\ref{ine2}) and (\ref{ine3}) lead to  
\begin{align*}
0 \geq A^{-}_{\e} &\geq 
-\e C \int_{\{ g_{\e} \leq 0 \}}|u_{\e}|^{2}_{h_{\e}, \ome}\, 
dV_{\ome} \geq -\e C \int_{Y}|u_{\e}|^{2}_{h_{\e}, \ome}\,  
dV_{\ome}\geq -\e C \| u \|^{2} _{h, \omega}. 
\end{align*}
Hence $A^{+}_{\e}$ and $A^{-}_{\e}$ converge to zero  
since $A_{\e} = A^{+}_{\e} + A^{-}_{\e}$ converges to zero. 
On the other hand, we have 
\begin{align*}
&\lla \sqrt{-1}\Theta_{h^{m+1}_{\e}}(F^{m+1})
\Lambda_{\ome} su_{\e}, su_{\e} \rra_{h^{m+1}_{\e}, \ome}\\
=& (m+1)\int_{Y} |s|^{2}_{h^{m}_{\e}} g_{\e}\, dV_{\ome} \\
=& (m+1) \Big\{ \int_{\{ g_{\e} \geq 0 \}} 
|s|^{2}_{h^{m}_{\e}} g_{\e}\, dV_{\ome} + 
\int_{\{ g_{\e} \leq 0 \}} 
|s|^{2}_{h^{m}_{\e}} g_{\e}\, dV_{\ome}\Big\}.
\end{align*}
Then it is easy to see the following inequalities\,$:$   
\begin{align*}
\bullet \quad 0 \leq \int_{\{ g_{\e} \geq 0 \}} 
|s|^{2}_{h^{m}_{\e}} g_{\e}\, dV_{\ome} &\leq  \sup_{X}|s|^{2}_{h^{m}_{\e}} 
\int_{\{ g_{\e} \geq 0 \}} 
g_{\e}\, dV_{\ome} \\ 
& \leq   \sup_{X}|s|^{2}_{h^{m}}\ A^{+}_{\e},  \\
\bullet \quad  0 \geq \int_{\{ g_{\e} \leq 0 \}} 
|s|^{2}_{h^{m}_{\e}} g_{\e}\, dV_{\ome} &\geq   
\sup_{X}|s|^{2}_{h^{m}_{\e}} \int_{\{ g_{\e} \leq 0 \}} 
 g_{\e}\, dV_{\ome} \\
&\geq   \sup_{X}|s|^{2}_{h^{m}} A_{\e}^{-}. 
\end{align*}
Therefore the right hand side of equality (\ref{B-eq2}) 
converges to zero. 
We obtain the conclusion of Proposition \ref{D''}. 
\end{proof}
\end{step}
\begin{step}[A construction of solutions of the $\dbar$-equation]\label{St3}
In this step, we prove Proposition \ref{sol} by using Theorem \ref{cru}. 
The proof of Theorem \ref{cru} is given in Section \ref{S5}.

\begin{prop}\label{sol-1}
There exists an $F$-valued $(n, q-1)$-form  $w_{\e}$ 
on $Y$ 
with the following properties\,$:$ 
\begin{equation*}
{\rm{(1)}}\hspace{0.2cm} \dbar w_{\e}=u-u_{\e}.   
\quad 
{\rm{(2)}}\hspace{0.2cm} \text{The norm }
\|w_{\e} \|_{h_{\e}, \ome} 
 \text{ is uniformly bounded}. 
\end{equation*}
\end{prop}
\begin{proof}
It is easy to see that $U_{\e}:=u-u_{\e} $ 
satisfies the assumptions of Theorem \ref{cru}. 
Indeed, it follows that 
$$
\|U_{\e}\|_{h_{\e}, \ome} \leq 
\|u \|_{h_{\e}, \ome} +\|u_{\e}\|_{h_{\e}, \ome}
\leq 2\|u\|_{h, \omega} < \infty
$$
from inequality (\ref{ine2}) and 
that $U_{\e}=u-u_{\e} \in {\rm{Im}}\, \dbar
\subset L^{n,q}_{(2)}(Y, F)_{h_{\e}, \ome}$ 
from the definition of $u_{\e}$. 
\end{proof}

\begin{prop}\label{sol}
There exists an $F^{m+1}$-valued $(n, q-1)$-form 
$v_{\e}$ 
on $Y $  
with the following properties\,$:$ 
\begin{equation*}
{\rm{(1)}}\hspace{0.2cm} \dbar v_{\e}=su_{\e}.   
\quad 
{\rm{(2)}}\hspace{0.2cm} \text{The norm }
\|v_{\e} \|_{h^{m+1}_{\e}, \ome} 
 \text{ is uniformly bounded}. 
\end{equation*}
\end{prop}
\begin{proof}
Since the cohomology class of $su$ is assumed to be zero 
in $H^{q}(X, K_{X}\otimes F^{m+1} \otimes \I{h^{m+1}})$,  
there exists an $F^{m+1}$-valued $(n, q-1)$-form $v$ 
such that $\dbar v =  su$ and 
$\|v \|_{h^{m+1}, \omega} < \infty$.  
If we take $w_{\e}$ satisfying the properties in Proposition \ref{sol-1} 
and put $v_{\e}:= -s w_{\e} + v$,  
then we have $\dbar v_{\e} = su_{\e}$. 
Further an easy computation yields  
\begin{align*}
\|v_{\e}\|_{h^{m+1}_{\e}, \ome} &\leq 
\|s w_{\e}\|_{h^{m+1}_{\e}, \ome} + 
\|v \|_{h^{m+1}_{\e}, \ome}\\
&\leq \sup_{X}|s|_{h^{m}} \|w_{\e} \|_{h_{\e}, \ome} + 
\|v \|_{h^{m+1}, \ome}. 
\end{align*}
By Lemma \ref{cano} and property (B), we have $\| v \|_{h^{m+1}, \ome}
\leq \|v \|_{h^{m+1}, \omega} < \infty$. 
Since the norm $\|w_{\e} \|_{h_{\e}, \ome}$ is uniformly bounded, 
the right hand side can be estimated by a constant independent of $\e$. 
\end{proof}
\end{step}

\begin{step}[Limit of the harmonic forms]\label{St4}
In this step, we show that $u_{\e} $ converges to zero in a suitable sense and 
this completes the proof. 
We first consider the following proposition obtained by Step \ref{St2} and Step \ref{St3}.

\begin{prop}\label{converge}
As $\e$ tends to zero, 
the norm $\| s u_{\e}\|_{h^{m+1}_{\e}, \ome}$ converges to zero. 
\end{prop}
\begin{proof}
For $v_{\e} \in L_{(2)}^{n, q-1}
(Y, F^{m+1})_{h^{m+1}_{\e}, \ome}$ satisfying the properties 
in Proposition \ref{sol},   
it is easy to see  
\begin{align*}
\| s u_{\e}\|_{h^{m+1}_{\e}, \ome}^{2}
&=\lla  s u_{\e}, \dbar v_{\e}  
 \rra_{h^{m+1}_{\e}, \ome} \\
&=
 \lla  \dbar^{*}_{h^{m+1}_{\e}}s u_{\e},v_{\e}  
 \rra_{h^{m+1}_{\e}, \ome} \\
&\leq 
\| \dbar^{*}_{h^{m+1}_{\e}} s u_{\e}\| _{h^{m+1}_{\e}, \ome}
\|v_{\e} \|_{h^{m+1}_{\e}, \ome}. 
\end{align*}
The norm   
$\|v_{\e} \|_{h^{m+1}_{\e}, \ome}$ is uniformly bounded by Proposition \ref{sol}.
On the other hand, the norm 
$\| \dbar^{*}_{h^{m+1}_{\e}} s u_{\e}\| _{h^{m+1}_{\e}, \ome}$ 
converges to zero by Proposition \ref{D''}. 
Hence the norm $\| s u_{\e}\|_{h^{m+1}_{\e}, \ome}$ converges to zero. 
\end{proof}

We want to take the limit of $u_{\e} \in L_{(2)}^{n, q}(Y, F)_{h_{\e}, \ome}$, 
but the $L^{2}$-space $L_{(2)}^{n, q}(Y, F)_{h_{\e}, \ome}$ depends on $\e$. 
For this reason we fix a small number $\e_{0}>0$ and 
consider the  fixed $L^{2}$-space $L_{(2)}^{n, q}(Y, F)_{h_{\e_{0}}, \ome}$. 
For every number $\e$ with $0< \e < \e_{0}$, 
we obtain 
\begin{equation*}
\|u_{\e} \|_{h_{\e_{0}}, \ome} \leq \|u_{\e} \|_{h_{\e}, \ome} 
\leq  \|u \|_{h, \omega} 
\end{equation*}
by property (b) of $h_{\e}$ and inequality (\ref{ine2}), 
which says that the norm of $u_{\e} $ with respect to 
$h_{\e_{0}}$ is 
uniformly bounded. 
In particular, 
there exists a subsequence of $\{ u_{\e} \}_{\e >0}$ 
that  converges to some 
$u_{0} \in L_{(2)}^{n, q}(Y, F)_{h_{\e_{0}}, \ome}$
with respect to the weak $L^{2}$-topology. 
For simplicity we continue to use the same notation $\{ u_{\e} \}_{\e >0}$ 
for this subsequence. 
The following proposition is proved by Proposition \ref{converge}. 

\begin{prop}\label{zero}
The weak limit $u_{0}$ of $\{ u_{\e} \}_{\e >0}$ 
in $L_{(2)}^{n, q}(Y, F)_{h_{\e_{0}}, \ome}$
is zero. 
\end{prop}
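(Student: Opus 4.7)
The plan is to deduce the vanishing of $\alpha$ from the strong convergence $\|su_\e\|_{h^{m+1}_\e, \ome} \to 0$ established in Proposition \ref{converge}, combined with the weak continuity of multiplication by $s$ and the pointwise non-vanishing of $s$ on $Y$.

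First, I would transfer the strong convergence of $su_\e$ from the $\e$-dependent norm $\|\cdot\|_{h^{m+1}_\e, \ome}$ to the fixed norm $\|\cdot\|_{h^{m+1}_{\e_0}, \ome}$. By property (b) of Theorem \ref{equi}, for every $\e < \e_0$ the inequality $h_{\e_0} \leq h_\e$ holds as metrics on $F$, hence $|\cdot|_{h^{m+1}_{\e_0}} \leq |\cdot|_{h^{m+1}_\e}$ pointwise on sections of $F^{m+1}$. Consequently
\[
\|su_\e\|_{h^{m+1}_{\e_0}, \ome} \;\leq\; \|su_\e\|_{h^{m+1}_\e, \ome} \;\longrightarrow\; 0,
\]
so $su_\e$ converges to $0$ strongly, and therefore also weakly, in the fixed Hilbert space $L^{n,q}_{(2)}(Y, F^{m+1})_{h^{m+1}_{\e_0}, \ome}$.

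Second, I would observe that multiplication by $s$ defines a bounded linear map
\[
M_s : L^{n,q}_{(2)}(Y, F)_{h_{\e_0}, \ome} \longrightarrow L^{n,q}_{(2)}(Y, F^{m+1})_{h^{m+1}_{\e_0}, \ome}.
\]
Indeed, since $h_{\e_0} \leq h$, one has $|s|_{h^m_{\e_0}} \leq |s|_{h^m}$ pointwise, and therefore
\[
\|su\|_{h^{m+1}_{\e_0}, \ome} \;\leq\; \bigl(\sup_X |s|_{h^m}\bigr)\, \|u\|_{h_{\e_0}, \ome},
\]
which is finite by the standing hypothesis $\sup_X |s|_{h^m}<\infty$. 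A bounded linear operator between Hilbert spaces is weak-to-weak continuous, so from $u_\e \rightharpoonup \alpha$ we obtain $s u_\e = M_s u_\e \rightharpoonup M_s\alpha = s\alpha$. Uniqueness of weak limits, combined with the previous step, then forces $s\alpha = 0$.

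Finally, since $s$ is holomorphic and $Y = X \setminus \{s=0\}$, the pointwise norm $|s|$ is strictly positive on $Y$; the equation $s\alpha = 0$ therefore forces $\alpha = 0$ almost everywhere on $Y$, completing the proof. I do not anticipate a serious obstacle in this step, since all the analytic content has already been absorbed into Propositions \ref{D''}, \ref{sol-1}, and \ref{converge}. The only point requiring care is the compatibility between the varying family of $L^2$-norms $\|\cdot\|_{h_\e, \ome}$ and the single fixed Hilbert space on which $\alpha$ lives, and this is handled by the monotonicity property (b) of the equisingular approximation.
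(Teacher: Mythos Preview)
Your argument is correct. It reaches the same conclusion as the paper but by a slightly different route.

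The paper proceeds by localisation: it introduces the open sets $A_{\delta}=\{|s|^{2}_{h^{m}_{\e_{0}}}>\delta\}$, observes that
\[
\|su_{\e}\|^{2}_{h^{m+1}_{\e},\ome}\ \geq\ \delta\int_{A_{\delta}}|u_{\e}|^{2}_{h_{\e_{0}},\ome}\,\ome^{n},
\]
so that $u_{\e}|_{A_{\delta}}\to 0$ strongly, then uses weak lower semicontinuity of the norm to conclude $\alpha|_{A_{\delta}}=0$, and finally takes the union $\bigcup_{\delta>0}A_{\delta}=Y$. You instead package the same ingredients functorially: multiplication by $s$ is a bounded operator $M_{s}$ between the fixed Hilbert spaces (because $|s|_{h^{m}_{\e_{0}}}\leq\sup_{X}|s|_{h^{m}}<\infty$), bounded operators are weak--to--weak continuous, hence $su_{\e}\rightharpoonup s\alpha$, while the strong convergence $su_{\e}\to 0$ (transferred to the $h_{\e_{0}}$-norm via property (b)) forces $s\alpha=0$; since $h_{\e_{0}}$ is smooth on $Y$ and $s$ is nowhere zero there, $\alpha=0$ a.e. Your version is a bit cleaner, avoiding the auxiliary sets $A_{\delta}$ and the explicit appeal to lower semicontinuity; the paper's version is more explicit and perhaps makes it clearer \emph{where} on $Y$ the vanishing comes from. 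Either way the analytic content is identical, residing entirely in Proposition \ref{converge} and the monotonicity (b) of the equisingular approximation.
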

\begin{proof}

For every positive number $\delta>0$, 
we define the subset $Y_{\delta}$ by 
$Y_{\delta}:= \{x \in Y \mid  |s|^{2}_{h^{m}_{\e_{0}}} > \delta \text{ at } x. \}$. 
Since the weight $\varphi_{\e_{0}}$ 
of $h_{\e_{0}}$ is upper semi-continuous, 
the norm $|s|^{2}_{h^{m}_{\e_{0}}}$ is lower semi-continuous.   
In particular, the subset $Y_{\delta}$ is an open set of $Y$. 
A simple computation yields  
\begin{align*}
\| s u_{\e}  \|^{2}_{h^{m+1}_{\e}, \ome} 
\geq \| s u_{\e}  \|^{2}_{h^{m+1}_{\e_{0}}, \ome} 
\geq \int_{Y_{\delta}} |s|^{2}_{ h^{m}_{\e_{0}} } 
|u_{\e}|^{2}_{h_{\e_{0}}, \ome}\, dV_{\ome} 
\geq \delta  \int_{Y_{\delta}} 
|u_{\e}|^{2}_{h_{\e_{0}}, \ome}\, dV_{\ome} \geq 0
\end{align*}
for every $\delta>0$. 
Since the left hand side converges to zero by Proposition \ref{converge}, 
the norm $\|u_{\e} \|_{Y_{\delta}, h_{\e_{0}}, \ome}$ on $Y_{\delta}$ 
also converges to zero as $\e$ tends to zero.  
We can easily see that 
$u_{\e} |_{Y_{\delta}}$ 
converges to $u_{0} |_{Y_{\delta}}$ with respect to 
the weak $L^{2}$-topology in 
$ L_{(2)}^{n, q}(Y_{\delta}, F)_{h_{\e_{0}}, \ome}$. 
Here  $u_{\e} |_{Y_{\delta}}$ (resp. $u_{0} |_{Y_{\delta}}$) denotes 
the restriction of  $u_{\e}$ (resp. $u_{0}$) to $Y_{\delta}$. 
Indeed, for an arbitrary 
$w \in L_{(2)}^{n, q}(Y_{\delta}, F)_{h_{\e_{0}}, \ome}$, 
the inner product  
$\lla u_{\e} |_{Y_{\delta}}, w
\rra_{Y_{\delta}} 
= \lla u_{\e}, \widetilde{w} 
\rra_{Y} $
converges to 
$\lla u_{0}, \widetilde{w}
\rra_{Y} 
= \lla u_{0}|_{Y_{\delta}}, w 
\rra_{Y_{\delta}} $,  
where $\widetilde{w}$ denotes the zero extension of $w$ to 
$Y$. 
Since $u_{\e} |_{Y_{\delta}}$ weakly 
converges to $u_{0} |_{Y_{\delta}}$ and the norm is lower semi-continuous 
with respect to the weak $L^2$-topology, 
we obtain
\begin{equation*}
\|u_{0} |_{Y_{\delta}} \|_{Y_{\delta}, h_{\e_{0}}, \ome} 
\leq 
\liminf_{\e \to 0}\|u_{\e} |_{Y_{\delta}} \|_{Y_{\delta}, h_{\e_{0}},  \ome}=0. 
\end{equation*}
Hence $u_{0} |_{Y_{\delta}} = 0$ for every $\delta>0$. 
Since the union of 
$\{Y_{\delta} \}_{\delta >0}$ is equal to $Y=X \setminus Z$ by the definition of $Y_{\delta}$, 
the weak limit $u_{0} $ is zero on $Y$.
\end{proof}

By using Proposition \ref{zero}, 
we complete the proof of Theorem \ref{main-main}. 
By the definition of $u_{\e}$, 
we have 
\begin{equation*}
u = u_{\e} + \dbar w_{\e}. 
\end{equation*}
Proposition \ref{zero} asserts    
that $\dbar w_{\e}$ converges to $u$  
with respect to the weak $L^{2}$-topology in the fixed $L^2$-space. 
By the orthogonal decomposition, it is easy to show that 
$u$ is a $\dbar$-exact form in the fixed $L^2$-space 
(that is, $u \in {\rm{Im}}\, \dbar \subset L^{n,q}_{(2)}(Y, F)_{h_{\e_{0}}, \ome}$). 
Indeed, for every 
$w = w_{1} + \dbar^{*}_{h_{\e_{0}}} w_{2} \in
\mathcal{H}_{h_{\e_{0}}, \ome}^{n, q}(F)
\oplus {\rm{Im}}\, \dbar^{*}_{h_{\e_{0}}}$, we have 
$\lla u, w  \rra =\lim_{\e \to 0}\lla 
\dbar w_{\e}, w_{1} + \dbar^{*}_{h_{\e_{0}}}w_{2} \rra =0$.

From $u \in {\rm{Im}}\, \dbar \subset 
L_{(2)}^{n, q}(Y, F)_{h_{\e_{0}}, \ome}$ and  property (c), 
we can show that $u \in {\rm{Im}}\, \dbar \subset 
L_{(2)}^{n, q}(Y, F)_{{h}, \omega}$, 
which says that the cohomology class $\{u\}$ is zero. 
To clarify our argument, 
let $\dbar_{h, \omega}$ 
(resp. $\dbar_{h_{\e_{0}}, \ome}$) be 
the closed operator $\dbar$ 
between $L^{2}$-spaces $L^{n, \bullet}_{(2)}(X, F)_{h, \omega}$ 
(resp. $L^{n, \bullet}_{(2)}(Y, F)_{h_{\e_{0}}, \ome}$).
We consider the Dolbeault cohomology group and the de Rham-Weil isomorphism. 
Then we have the following commutative diagram\,$:$ 
\[\xymatrixcolsep{4pc}
   \xymatrix{
\dfrac{{\rm{Ker}}\, \dbar_{h, \omega}} {{\rm{Im}}\, \dbar_{h, \omega}} 
\ar[r]^-{j} \ar[d]^-{\cong}_-{{\overline{f}_{1}}} & \dfrac{{\rm{Ker}}\, \dbar_{h_{\e_{0}}, \ome}}{{\rm{Im}}\, \dbar_{h_{\e_{0}}, \ome}} \ar[d]^-{\cong}_-{{\overline{f}_{2}}}\\
\check{H}^{q}(X, K_{X}\otimes F \otimes \I{h})  & \check{H}^{q}(X, K_{X}\otimes F \otimes \I{h_{\e_{0}}}). \ar@{=}[l]
} \]
Here $j$ is the map induced by the natural map from 
$L^{n, \bullet}_{(2)}(X, F)_{h, \omega}$ to 
$L^{n, \bullet}_{(2)}(Y, F)_{h_{\e_{0}}, \ome}$, 
and $\overline{f}_{i}$ is the De Rham-Weil isomorphism to 
the $\rm{\check{C}}$ech cohomology group. 
(See Section \ref{S5} or \cite[Claim\,1]{Fuj12-A} for the construction of $\overline{f}_{2}$). 
The blew equality is obtained from $\I{h_{\e}}=\I{h}$. 
Here we essentially used property $(c)$.  
It follows that the cohomology class $\{u\}$ represented by 
$u \in L^{n, q}_{(2)}(X, F)_{h, \omega}$ goes to zero by $j$ 
from $u \in {\rm{Im}}\, \dbar \subset 
L_{(2)}^{n, q}(Y, F)_{h_{\e_{0}}, \ome}$. 
Therefore we can obtain that $u \in {\rm{Im}}\, \dbar \subset 
L_{(2)}^{n, q}(Y, F)_{{h}, \omega}$ by chasing the above diagram. 
\end{step}
\end{proof}

\section{Applications}\label{S4}
In this section, we give two corollaries of Theorem \ref{main} 
and their proof. 
One is an injectivity theorem for nef and abundant line bundles, 
and the other is a Nadel type vanishing theorem.  

It is reasonable to expect 
the same conclusion as in Theorem \ref{Kol} to hold for nef line bundles, 
but there exist counterexamples 
to the injectivity theorem for nef line bundles.  
However, it follows from \cite[Proposition\,2.1]{Kaw85} 
(cf. \cite{Nak85}, \cite[Corollary\,1]{Rus09}) 
that a metric $h_{\min}$ with minimal singularities on $F$ 
satisfies $\I{h_{\min}^{m}}= \mathcal{O}_{X}$ for any $m>0$  
if $F$ is nef and abundant 
(that is, the numerical dimension  
agrees with the Kodaira dimension).   
Therefore Theorem \ref{main} leads to the following corollary. 
(On projective varieties, 
a similar conclusion was proved in 
\cite{EP08} and \cite{EV92} by different methods.)  
It is worth to point out that Theorem \ref{Eno} is 
not sufficient to obtain Corollary \ref{good}. 
This is because,  
the above metric $h_{\min}$ is not smooth and 
does not always have algebraic singularities 
even if $F$ is nef and abundant  
(for example, see \cite[Example\,5.2]{Fuj12-C}).

\begin{cor}\label{good}
Let $F$ be a nef and abundant line bundle 
on a compact K\"ahler manifold $X$. 
Then the same conclusion as in Theorem \ref{Kol} holds.
That is, for a $($non-zero$)$ section $s$ of a positive multiple 
$F^{m}$ of the line bundle $F$, 
the multiplication map 
induced by the tensor product with $s$ 
\begin{equation*}
\Phi_{s}: H^{q}(X, K_{X} \otimes F) 
\xrightarrow{\otimes s} 
H^{q}(X, K_{X} \otimes F^{m+1} )
\end{equation*}
is injective for any $q$. 
\end{cor}

As another application, we can obtain 
a Nadel type vanishing (Theorem \ref{gen}), 
which leads to the following corollary.

\begin{cor}[{cf.\,\cite{Cao12}, \cite{Mat13-B}}]
\label{main-co}
Let $F$ be a line bundle on a smooth projective 
variety $X$ and  
$h_{\min}$ be 
a metric with minimal singularities on $F$.
Then  
\begin{equation*}
H^{q}(X, K_{X} \otimes F \otimes \I{h_{\min}}) = 0 
\hspace{0.4cm} {\text{for}}\ {\text{any}}\ q > n-\kappa(F).
\end{equation*}
Here $\kappa(F)$ denotes the Kodaira dimension of $F$. 
\end{cor}
This result is non-trivial  
even when the line bundle $F$ is big 
(that is, $\kappa(F)=n$). 
In his paper \cite{Cao12}, Cao proved 
the celebrated vanishing theorem for 
cohomology groups with coefficients in 
$K_{X}\otimes F \otimes \mathcal{I}_{+}(h)$. 
It is relatively easier to handle $\mathcal{I}_{+}(h)$ 
than $\mathcal{I}(h)$
(see \cite{DEL00} for the precise definition).  
If $h_{\min}$ has algebraic singularities, 
we can easily see that  $\mathcal{I}_{+}(h_{\min})$ agrees 
with $\mathcal{I}(h_{\min})$, 
but unfortunately $h_{\min}$ does not 
always have algebraic singularities. 
Thanks to Theorem \ref{main}, we can obtain Corollary \ref{main-co}  
without the assumption of algebraic singularities.

\begin{rem}
Three months after we finish writing our preprint, 
Guan and Zhou proved  
the strong openness conjecture in \cite{GZ13}. 
Further another proof was given by 
Hi$\rm{\hat{\underline{e}}}$p in \cite{Hie14}
and by Lempert in \cite{Lem14}. 
Although their celebrated result and Cao's theorem 
lead to Corollary \ref{main-co}, 
we believe that it is worth to display our techniques. 
This is because, our techniques are quite different
from them and 
give a new viewpoint to prove the vanishing theorem via the asymptotic vanishing theorem.  
\end{rem}



At the end of this section, 
we prove Theorem \ref{gen} by using  Theorem \ref{main}. 
First we give the following definition. 
\begin{defi} \label{bdd-Ko}
Let $F$ be a line bundle on a compact complex manifold $X$ and 
$h$ be a singular metric on $F$. \\
(1)
We define $H_{{\rm{bdd}}, h}^{0}(X, F)$ by 
the space of sections of $F$ with bounded norm with respect to $h$. 
That is, 
\begin{equation*}
H_{{\rm{bdd}}, h}^{0}(X, F): =
\{ s \in H(X, F) \mid \sup_{X} |s|_{h} < \infty \}.
\end{equation*}
\\
(2) The {\textit{generalized Kodaira dimension}} 
$\kappa_{\rm{bdd}}(F, h)$ of $(F,h)$ is defined to be $-\infty$ 
if $H_{{\rm{bdd}}, h^{m}}^{0}(X,F^{m})=0$ for any 
$m>0$. 
Otherwise, $\kappa_{\rm{bdd}}(F, h)$ is defined 
by 
\begin{equation*}
\kappa_{\rm{bdd}}(F, h): = 
\sup \{ k \in \mathbb{Z}\mid \limsup_{m \to \infty} 
\dim H_{{\rm{bdd}}, h^{m}}^{0}(X, F^{m})\big/ m^{k} > 0 \}. 
\end{equation*}
\end{defi}

If $h_{\min}$ is a metric with minimal singularities on $F$, 
the norm $|s|_{h_{\min}^{m}}$ 
is bounded on $X$ for any section $s \in H^{0}(X, F^{m})$. 
(For example see \cite{Dem} or \cite{Mat13-B}.) 
It implies that 
$H_{{\rm{bdd}}, h_{\min}^{m}}^{0}(X, F^{m})$ is isomorphic 
to  $H^{0}(X, F^{m})$ for every $m \geq 0$. 
In particular, $\kappa_{\rm{bdd}}(F, h_{\min})$ agrees with 
the usual Kodaira dimension $\kappa(F)$. 
Therefore the following theorem leads to  Corollary \ref{main-co}. 

\begin{theo}\label{gen}
Let $F$ be a line bundle 
on a smooth projective variety $X$ and 
$h$ be a singular metric  on $F$ with semi-positive curvature. 
Then 
\begin{equation*}
H^{q}(X, K_{X}\otimes F \otimes \I{h}) = 0
\hspace{0.4cm} {\text{for}}\ {\text{any}}\ 
q > n-\kappa_{\rm{bdd}}(F, h).
\end{equation*}
\end{theo}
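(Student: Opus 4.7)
The plan is to derive Theorem \ref{gen} as a formal consequence of Theorem \ref{main} combined with the asymptotic upper bound on higher cohomology supplied by the asymptotic vanishing theorem [Mat13-A, Theorem 4.1] highlighted in the introduction. Set $\kappa := \kappa_{\rm{bdd}}(F, h)$; the case $\kappa = -\infty$ is vacuous, so assume $\kappa \geq 0$. Arguing by contradiction, suppose $H^q(X, K_X \otimes F \otimes \I{h}) \neq 0$ for some $q > n - \kappa$, and fix a nonzero class $\alpha$ in this group.

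The core idea is to swap which variable we fix in the bilinear pairing $(s, \alpha) \mapsto s\alpha$. For every $m \geq 1$ and every nonzero $s \in H_{{\rm{bdd}}, h^m}^{0}(X, F^m)$, the hypothesis $\sup_X |s|_{h^m} < \infty$ is satisfied, so Theorem \ref{main} forces $s\alpha = \Phi_s(\alpha) \neq 0$ in $H^q(X, K_X \otimes F^{m+1} \otimes \I{h^{m+1}})$. Thus the linear map
\[
\Psi_\alpha \colon H_{{\rm{bdd}}, h^m}^{0}(X, F^m) \longrightarrow H^q(X, K_X \otimes F^{m+1} \otimes \I{h^{m+1}}), \qquad s \longmapsto s\alpha,
\]
has trivial kernel, whence
\[
\dim H_{{\rm{bdd}}, h^m}^{0}(X, F^m) \;\leq\; \dim H^q(X, K_X \otimes F^{m+1} \otimes \I{h^{m+1}})
\]
for every $m \geq 1$.

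To conclude, I would invoke the asymptotic vanishing theorem, which provides a polynomial bound of the form
\[
\dim H^q(X, K_X \otimes F^{m+1} \otimes \I{h^{m+1}}) = O(m^{n-q})
\]
for $q > 0$ on a smooth projective variety equipped with a semi-positively curved singular metric. Since $q > n - \kappa$ gives $n - q < \kappa$, the right-hand side is $o(m^\kappa)$. On the other hand, from the definition of $\kappa = \kappa_{\rm{bdd}}(F, h)$ as the supremum over integers $m$ with $\limsup_k k^{-m}\dim H_{{\rm{bdd}}, h^k}^{0}(X, F^k) > 0$, there exists a sequence $m_i \to \infty$ and a constant $c > 0$ with $\dim H_{{\rm{bdd}}, h^{m_i}}^{0}(X, F^{m_i}) \geq c\, m_i^\kappa$. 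Together these contradict the dimension estimate coming from injectivity of $\Psi_\alpha$, completing the proof.

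The linear-algebraic step, passing from $\Phi_s$-injectivity (for each $s$) to $\Psi_\alpha$-injectivity (for fixed $\alpha$), is essentially immediate once Theorem \ref{main} is in hand. The substantive ingredient, and the main obstacle, is the asymptotic upper bound on higher cohomology; once this input is available the vanishing follows purely by dimension counting, and this is precisely the viewpoint flagged in the Remark after Corollary \ref{main-co} regarding obtaining Nadel-type vanishing \emph{via} the asymptotic vanishing theorem.
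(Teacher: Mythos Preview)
Your proposal is correct and follows essentially the same argument as the paper: assume a nonzero class $\alpha$, use Theorem \ref{main} to show that $s\mapsto s\alpha$ is injective (the paper phrases this as ``linearly independent $\{s_i\}$ map to linearly independent $\{s_i\alpha\}$''), and then contradict the asymptotic bound $\dim H^{q}(X, K_X\otimes F^{m}\otimes\I{h^{m}})=O(m^{n-q})$ from \cite[Theorem 4.1]{Mat13-A}. Your explicit naming of the map $\Psi_\alpha$ is a slightly cleaner packaging, but the content is identical.
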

\begin{proof}
For a contradiction, 
we assume that there exists a non-zero cohomology class 
$\alpha \in H^{q}(X, K_{X}\otimes F \otimes \I{h})$. 
If sections  
$\{s_{i}\}_{i=1}^{N}$ in 
$H_{{\rm{bdd}}, h^{m}}^{0}(X, F^{m})$ are 
linearly independent, 
then  $\{s_{i} \alpha \}_{i=1}^{N}$ 
are also  linearly independent in 
$ H^{q}(X, K_{X}\otimes F^{m+1} \otimes \I{h^{m+1}})$. 
Indeed, if 
$\sum_{i=1}^{N} c_{i} s_{i} \alpha =0$  
for some $c_{i} \in \mathbb{C}$, 
then we obtain $\sum_{i=1}^{N} c_{i} s_{i}=0$ by Theorem \ref{main}. 
Since $\{s_{i}\}_{i=1}^{N}$ are linearly independent, 
we have $c_{i}=0$ for every $i=1,2,\dots, N$. 
Therefore we obtain  
\begin{equation*}
\dim H_{{\rm{bdd}}, h^{m}}^{0}(X, F^{m}) \leq 
\dim H^{q}(X, K_{X}\otimes F^{m+1} \otimes \I{h^{m+1}}). 
\end{equation*}
On the other hand, by \cite[Theorem\,4.1]{Mat13-A}, 
we have the asymptotic vanishing theorem 
\begin{equation*}
\dim H^{q}(X, K_{X}\otimes F^{m} \otimes \I{h^{m}})= 
O(m^{n-q}) \quad \text{as } m \to \infty
\end{equation*}
for any $q\geq 0$ 
(cf. \cite[(6.18)\,Lemma]{Dem}). 
If $q > n-\kappa_{\rm{bdd}}(F, h)$, it is a contradiction.  
\end{proof}


\section{$\rm{\check{C}}$ech complex and De Rham-Weil isomorphism}\label{S5}
The aim of this section is to prove Theorem \ref{cru}, 
which gives solutions of the $\dbar$-equation with suitable $L^2$-estimates in Step \ref{St3} of Section \ref{S3}. 
The $\rm{\check{C}}$ech complex and the De Rham-Weil isomorphism play a crucial role 
in the proof of Theorem \ref{cru}.

\subsection{On the space of cochains}\label{S5-1}
In this subsection, for the proof of Theorem \ref{fre}, 
we study the space of cochains with the topology induced by 
the local $L^{2}$-norms with respect to singular metrics, 
which is used when we prove Theorem \ref{cru}. 
We first recall the following result on holomorphic functions, 
which can be proved by the division theorem. 
See \cite[Section D, Chapter I\hspace{-.1em}I]{GR65} for the proof. 

\begin{theo}[{\cite[Theorem 2, Section D, Chapter I\hspace{-.1em}I]{GR65}}] \label{division}
Let $G_{1}, G_{2}, \dots, G_{N}$ be holomorphic functions 
on an open set $B$ in $\mathbb{C}^{n}$. 
If holomorphic functions $\{G_{i}\}_{i=1}^{N}$ generate 
the stalk $\mathcal{I}_{p}$  at $p \in B$ of an ideal sheaf $\mathcal{I} \subset \mathcal{O}_{B}$, 
then there exist a neighborhood $L_{p} \Subset B$ of $p$  
and a constant $C_{p}>0$ with 
the following property\,$:$ \vspace{0.1cm}\\
\quad For every holomorphic function $F$ on $L_{p}$ 
whose germ at $p$ 
belongs to $\mathcal{I}_{p}$, 
there exist holomorphic functions $\{h_{j}\}_{j=1}^{N}$ on 
$L_{p}$ 
such that 
\begin{equation*}F= \sum_{j=1}^{N} h_{j} G_{j}
\quad \text{and} \quad
\sup_{L_{p}} |h_{j}| \leq C_{p} \sup_{L_{p}} |F|. 
\end{equation*}
\end{theo}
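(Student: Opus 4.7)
The plan is to apply the Weierstrass preparation and division theorems locally around $p$, combined with induction on the complex dimension $n$, to produce the representation $F=\sum_j h_j G_j$ together with the uniform sup-norm bound. A functional-analytic open mapping argument provides an alternative conceptual framework.

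First I would set up a local normal form. After a generic linear change of coordinates centered at $p$, each generator $G_j$ is regular in $z_n$, and the Weierstrass preparation theorem gives $G_j=u_j P_j$ on some polydisk $L=L'\times\Delta_n\Subset B$, where $u_j$ is a holomorphic unit and $P_j$ is a distinguished pseudopolynomial in $z_n$ of degree $d_j$. The Weierstrass division theorem then expresses any $F\in\mathcal{O}(L)$ as $F=P_1 Q_1+R_1$ with $R_1$ a polynomial in $z_n$ of degree $<d_1$, together with the uniform estimate $\sup_L|Q_1|,\sup_L|R_1|\le C\sup_L|F|$. If in addition the germ of $F$ at $p$ lies in $\mathcal{I}_p$, then the coefficients of $R_1$, viewed as a tuple in $\mathcal{O}(L')^{d_1}$, have germ at $p'=(p_1,\dots,p_{n-1})$ lying in a finitely generated submodule of $\mathcal{O}_{\mathbb{C}^{n-1},p'}^{d_1}$ generated by the images of $P_2,\dots,P_N$ modulo $P_1$. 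Invoking the inductive hypothesis (the module analogue of the statement in dimension $n-1$) produces a representation of $R_1$ in terms of these generators with a uniform sup-norm estimate; combining everything yields the desired bound for $F$ itself on a fixed subpolydisk $L_p\Subset L$.

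Alternatively, one can argue more abstractly. Coherence of $\mathcal{I}$ (Oka's theorem) together with Cartan's Theorem~B applied to the exact sequence $0\to\mathcal{R}\to\mathcal{O}^N\to\mathcal{I}|_{L'}\to 0$ on a Stein polydisk $L'\Subset B$ containing $p$ imply that the map $\mathcal{O}(L')^N\to\mathcal{I}(L')$ is surjective, so at least the existence of holomorphic cofactors $h_j$ is guaranteed. Turning this into the uniform sup-norm bound requires the open mapping theorem between appropriate Banach spaces: one considers
\begin{equation*}
T\colon H^\infty(L_p)^N\longrightarrow H^\infty(L_p),\qquad (h_1,\dots,h_N)\longmapsto\sum_{j=1}^N h_j G_j,
\end{equation*}
and verifies that the image of $T$ equals the closed subspace $M\subset H^\infty(L_p)$ of bounded holomorphic functions whose stalk at every point of $L_p$ lies in $\mathcal{I}$. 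Open mapping then supplies the required constant $C_p$.

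The main obstacle is verifying that the image of $T$ really equals $M$---equivalently, that a bounded ideal section admits a bounded cofactor representation on the same polydisk $L_p$. Sheaf theory alone does not deliver this, since Cartan's Theorem~B produces cofactors only in $\mathcal{O}(L_p)$ with no \emph{a priori} sup-norm control, and the naive Fr\'echet-space open mapping argument only yields an estimate of the form $\sup_{L_p}|h_j|\le C\sup_{K}|F|$ for some compact $K\Subset L'$ possibly larger than $L_p$. The Weierstrass division estimates are precisely the mechanism by which boundedness on a fixed polydisk is preserved through each step of the algorithm, and they form the technical heart of the proof; once they are in place, the open mapping theorem merely packages the conclusion.
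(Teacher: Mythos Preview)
The paper does not supply its own proof of this theorem; it simply cites \cite[Section D, Chapter I\hspace{-.1em}I]{GR65} and remarks that the result ``can be proved by the division theorem.'' Your first approach---Weierstrass preparation and division with induction on the dimension, reducing the remainder to a module problem in one fewer variable---is exactly the argument carried out in Gunning--Rossi, so your proposal is correct and matches the reference the paper invokes. Your second, functional-analytic sketch is a reasonable heuristic, and you have correctly diagnosed its gap: surjectivity of $T$ onto $M$ in the $H^{\infty}$ setting is precisely what needs to be proved, and it is the Weierstrass division estimates (not abstract open mapping) that deliver it.
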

This theorem leads to the following lemma. 
In his paper \cite{Cao12}, Cao proved the former conclusion of 
the lemma when a quasi-psh function 
$\varphi$ has analytic singularities. 
For our purpose, we need a generalization 
of his result and the stronger conclusion (the latter 
conclusion of the lemma).

\begin{lemm}\label{key}
Let $\varphi$ be a quasi-psh function 
on an open set $B$ in $\mathbb{C}^{n}$ and 
$G_{1}, G_{2}, \dots, G_{N}$ be holomorphic functions on $B$ 
that generate the stalk of 
the multiplier ideal sheaf $\I{\varphi}$ at every point in $B$. 
Consider a sequence of holomorphic functions $\{f_{k}\}_{k=1}^{\infty}$ 
satisfying the following properties\,$:$ 
\begin{itemize}
\item[(1)] $f_{k}$ belongs to $H^{0}(B, \I{\varphi})$
$($that is, $|f_{k}| e^{-\varphi}$ is locally $L^{2}$-integrable 
on $B$$)$.
\item[(2)] $\{f_{k} \}_{k=1}^{\infty}$ uniformly converges to $f$ 
on every relatively compact set in $B$. 
\end{itemize}
Then the limit $f$  belongs to $H^{0}(B, \I{\varphi})$. 
Moreover, for every relatively compact set $K \Subset B$, 
the $($local$)$ $L^{2}$-norm  
\begin{equation*}
\int_{K} |f_{k} - f|^{2} e^{-2 \varphi} 
\end{equation*}
converges to zero  
as $k$ tends to infinity. 
\end{lemm}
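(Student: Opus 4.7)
The strategy is to localize by Theorem~\ref{division} and convert the hypothesized uniform convergence of $\{f_{k}\}$ into weighted $L^{2}$ convergence. The point is that the generators $G_{1},\dots,G_{N}$ of $\I{\varphi}$ provide local decompositions whose coefficients inherit sup-norm control from the divided function, and since $|G_{j}|^{2}e^{-2\varphi}$ is locally integrable by the choice of generators, a sup-norm bound on the coefficients of $f_{k}-f$ translates immediately into a local $L^{2}$-bound. The argument naturally splits in two: first establish that $f$ itself lies in $\I{\varphi}$, and only then exploit the division theorem on the differences $f_{k}-f$.

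First I would prove $f\in H^{0}(B,\I{\varphi})$. Fix $p\in B$ and let $L_{p}\Subset B$ and $C_{p}>0$ be the neighborhood and constant furnished by Theorem~\ref{division}. Hypothesis~(2) makes $\sup_{L_{p}}|f_{k}|$ uniformly bounded in $k$, and applying Theorem~\ref{division} to each $f_{k}$ gives holomorphic functions $h_{k,j}^{(p)}$ on $L_{p}$ with
\begin{equation*}
f_{k}=\sum_{j=1}^{N}h_{k,j}^{(p)}G_{j}\quad\text{and}\quad \sup_{L_{p}}|h_{k,j}^{(p)}|\leq C_{p}\sup_{L_{p}}|f_{k}|.
\end{equation*}
The coefficients are thus a normal family, so Montel's theorem yields a subsequence along which $h_{k,j}^{(p)}$ converges locally uniformly on $L_{p}$ to some holomorphic $h_{j}^{(p)}$. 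Passing to the limit in the decomposition gives $f=\sum_{j}h_{j}^{(p)}G_{j}$ on $L_{p}$, so the germ $f_{p}$ belongs to $\I{\varphi}_{p}$. Since $p$ was arbitrary, $f\in H^{0}(B,\I{\varphi})$.

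Next I would prove the local $L^{2}$-convergence. Since $f\in H^{0}(B,\I{\varphi})$, each germ $(f_{k}-f)_{p}$ lies in $\I{\varphi}_{p}$, so Theorem~\ref{division} applied to $f_{k}-f$ produces holomorphic $\tilde{h}_{k,j}^{(p)}$ on $L_{p}$ with
\begin{equation*}
f_{k}-f=\sum_{j=1}^{N}\tilde{h}_{k,j}^{(p)}G_{j}\quad\text{and}\quad \sup_{L_{p}}|\tilde{h}_{k,j}^{(p)}|\leq C_{p}\sup_{L_{p}}|f_{k}-f|.
\end{equation*}
The assumption $G_{j}\in H^{0}(B,\I{\varphi})$ and $L_{p}\Subset B$ give $\int_{L_{p}}|G_{j}|^{2}e^{-2\varphi}<\infty$ for each $j$, while hypothesis~(2) ensures $\sup_{L_{p}}|f_{k}-f|\to 0$. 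A routine Cauchy--Schwarz estimate then yields
\begin{equation*}
\int_{L_{p}}|f_{k}-f|^{2}e^{-2\varphi}\leq N C_{p}^{2}\bigl(\sup_{L_{p}}|f_{k}-f|\bigr)^{2}\sum_{j=1}^{N}\int_{L_{p}}|G_{j}|^{2}e^{-2\varphi}\longrightarrow 0.
\end{equation*}
For an arbitrary relatively compact $K\Subset B$, cover $\overline{K}$ by finitely many neighborhoods $L_{p_{1}},\dots,L_{p_{M}}$ of the above form and sum the estimates.

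The only genuine subtlety is the order of operations: the division theorem requires the function being divided to already have germ in $\I{\varphi}_{p}$, so one cannot apply it directly to $f_{k}-f$ until after establishing membership of $f$ in $\I{\varphi}$ via Montel's theorem. Once that step is in place everything else is a bookkeeping exercise, since the local integrability of $|G_{j}|^{2}e^{-2\varphi}$ absorbs the weight completely and the rest of the estimate is uniform in $k$.
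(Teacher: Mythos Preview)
Your proof is correct and follows essentially the same approach as the paper: localize via Theorem~\ref{division}, use Montel's theorem on the coefficients to show $f\in H^{0}(B,\I{\varphi})$, then apply the division theorem to $f_{k}-f$ and control the weighted $L^{2}$-norm by the sup-norm of the new coefficients times the finite integrals $\int_{L_{p}}|G_{j}|^{2}e^{-2\varphi}$, finishing with a finite cover of $K$. You have also correctly identified the one genuine subtlety, namely that the division step for $f_{k}-f$ requires first knowing $f\in\I{\varphi}$.
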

\begin{proof}
For an arbitrary point $p \in B$, 
there exist 
a neighborhood 
$L_{p} \Subset B$ of $p$ and a positive constant $C_{p}$ 
with the property in Theorem \ref{division}. 
Since the germ of $f_{k}$ belongs to 
the stalk $\I{\varphi}_{p}$, 
there exist holomorphic functions $\{h_{k, j}\}_{j=1}^{N}$ on $L_{p}$ 
such that 
\begin{equation*}f_{k}= \sum_{j=1}^{N} h_{k, j} G_{j}
\quad \text{and} \quad
\sup_{L_{p}} |h_{k, j}| \leq C_{p} \sup_{L_{p}} |f_{k}|. 
\end{equation*}
The sup-norm $\sup_{L_{p}} |f_{k}|$ on $L_{p}$ is uniformly bounded 
by property (2). 
The above inequality implies that the sup-norm $\sup_{L_{p}} |h_{k, j}|$ 
is also uniformly bounded, 
and thus by Montel's theorem there exists a subsequence 
$\{h_{k_{\ell}, j} \}_{\ell=1}^{\infty}$ 
that uniformly converges to a holomorphic function $h_{j}$ 
on every relatively compact set in $L_{p}$. 
For every point $x$ in $L_{p}$ we have
\begin{align*}
f(x) = \lim_{\ell \to \infty} f_{k_{\ell}}(x) 
= \lim_{\ell \to \infty} \sum_{j=1}^{N} h_{k_{\ell}, j}(x) G_{j}(x)
= \sum_{j=1}^{N} h_{j}(x) G_{j}(x). 
\end{align*}
Therefore the germ of $f$ belongs to $\I{\varphi}_{p}$  
since the germ of $G_{j}$ belongs to 
$\I{\varphi}_{p}$.

Finally, we prove the latter conclusion. 
We have already known that the germ of $f_{k}-f$ 
belongs to $\I{\varphi}_{p}$.  
By Theorem \ref{division}, there exist a relatively compact set 
$L_{p} \Subset B$, a positive constant $C_{p}$,   
and holomorphic functions $\{g_{k,j}\}_{j=1}^{N}$ on $L_{p}$ such that  
\begin{equation*}
f_{k}-f = \sum_{j=1}^{N} g_{k,j} G_{j} \text{\quad and \quad} 
\sup_{L_{p}} |g_{k,j}|
\leq 
C_{p}\sup_{L_{p}} |f_{k}-f| \to 0.
\end{equation*}
On the other hand, an easy computation yields  
\begin{align*}
\int_{L_{p}} |f_{k} - f|^{2} e^{-2 \varphi} 
&\leq \int_{L_{p}} 
\Big(\sum_{j=1}^{N} |g_{k,j}|^{2}\Big) 
\Big(\sum_{j=1}^{N} |G_{j}|^{2}\Big) e^{-2 \varphi} \\
&\leq 
\Big(\sum_{j=1}^{N} \sup_{L_{p}}|g_{k,j}|^{2}\Big)
\int_{L_{p}} 
\sum_{j=1}^{N} |G_{j}|^{2} e^{-2 \varphi}. 
\end{align*} 
The right hand side converges to zero  
since 
the integral of $|G_{j}|^{2} e^{-2 \varphi}$ is finite and 
$g_{k, j}$ uniformly converges to zero on $L_{p}$. 
For a given relatively compact set $K \Subset B$, 
by taking a finite cover  $\{L_{p_{\nu}}\}_{\nu=1}^{m}$ of $K$,  
we can see
\begin{equation*}
\int_{K} |f_{k} - f|^{2} e^{-2 \varphi} \leq 
\sum_{\nu=1}^{m} \int_{L_{p_{\nu}}} |f_{k} - f|^{2} e^{-2 \varphi} \to 0.
\end{equation*}
This completes the proof. 
\end{proof}

To prove Theorem \ref{fre}, 
we recall the notation on the space of cochains. 
Let $G$ be a line bundle on a complex manifold $X$ and 
$h$ be a singular metric on $G$ satisfying $\sqrt{-1}\Theta_{h}(G) \geq \gamma$ 
for some smooth $(1,1)$-form $\gamma$. 
We take a Stein cover $\mathcal{U}:=\{B_{i}\}_{i \in I}$ of $X$ with the following properties\,$:$ 
\begin{itemize}
\item[$\bullet$] $G$ admits a local trivialization on $B_{i}$.
\item[$\bullet$] There are holomorphic functions on $B_{i}$ that  
generate the stalk of the multiplier ideal sheaf $\I{h}$ at every point in $B_{i}$. 
\end{itemize}
Note that we can take such an open cover 
since the multiplier ideal sheaf $\I{h}$ is coherent by a theorem of Nadel. 
Let $C^{q}(\mathcal{U}, G\otimes \I{h})$ be 
the space of $q$-cochains 
with coefficients in $G\otimes \I{h}$.
For a $q$-cochain $\alpha=\{\alpha_{i_{0}...i_{q}}\}_{i_{0}...i_{q}} \in 
C^{q}(\mathcal{U}, G \otimes \I{h})$, 
we often omit the notation of the subscript, such as \lq \lq$i_{0}...i_{q}$" 
and regard $\alpha_{i_{0}...i_{q}}$ as a holomorphic function 
under the trivialization of $G$ on $B_{i}$. 
The semi-norm  
$p_{K_{i_{0}...i_{q}}}(\bullet)$ is defined by 
\begin{equation*}
p_{K_{i_{0}...i_{q}}}(\alpha)^{2}:=
\int_{K_{i_{0}...i_{q}}} |\alpha_{i_{0}...i_{q}}|_{h}^{2} 
\end{equation*}
for a relatively compact set 
$K_{i_{0}...i_{q}} \Subset B_{i_{0}...i_{q}}:=B_{i_{0}}\cap \dots \cap B_{i_{q}}$. 
At the end of this section, 
we show that $C^{q}(\mathcal{U}, G\otimes  \I{h})$ is 
a Fr\'echet space with respect to these semi-norms.

\begin{theo}\label{fre}
In the above situation, 
the space of $q$-cochains 
$C^{q}(\mathcal{U}, G \otimes \I{h})$ is a Fr\'echet space. 
\end{theo}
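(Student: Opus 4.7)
The plan is to verify the three defining properties of a Fr\'echet space for $C^{p}(\mathcal{U}, F\otimes\I{h})$. Local convexity is built into the definition, since any semi-norm topology is automatically locally convex. For metrizability, I would observe that each $B_{i_{0}...i_{p}}$ admits a countable exhaustion $K_{1}\Subset K_{2}\Subset\cdots\Subset B_{i_{0}...i_{p}}$ by relatively compact open sets, and an arbitrary semi-norm $p_{K}$ with $K\Subset B_{i_{0}...i_{p}}$ is dominated by $p_{K_{n}}$ once $\overline{K}\subset K_{n}$. Hence the topology is generated by a countable family of semi-norms; it is Hausdorff by the identity theorem for holomorphic functions (vanishing a.e.\ on $B_{i_{0}...i_{p}}$ forces the cochain entry to vanish identically), hence metrizable.

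The remaining and main task is completeness. Let $\{f^{(k)}\}_{k\ge 1}$ be a Cauchy sequence in $C^{p}(\mathcal{U}, F\otimes\I{h})$. For each multi-index $I=(i_{0},\dots,i_{p})$ and each $K\Subset B_{I}$, the sequence $\{f^{(k)}_{I}\}$ is weighted-$L^{2}$-Cauchy on $K$. Fixing a local trivialization of $F$ on $B_{I}$ and writing $h=g e^{-\varphi}$ with $g$ smooth and $\varphi$ quasi-psh, the quasi-psh weight $\varphi$ is upper semi-continuous, hence bounded above on $\overline{K}$, while $g$ is bounded above and below by positive constants on $\overline{K}$. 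Consequently the weighted norm is comparable to the ordinary $L^{2}$-norm on $K$, so $\{f^{(k)}_{I}\}$ is also ordinary $L^{2}$-Cauchy on $K$. Since each $f^{(k)}_{I}$ is holomorphic, the mean-value (Bergman) estimate upgrades this to uniform Cauchyness on any strictly interior compact subset, producing a holomorphic limit $f_{I}$ on $B_{I}$ to which $f^{(k)}_{I}$ converges uniformly on compacts.

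The delicate step, and the main obstacle, is to promote this uniform convergence into genuine weighted-$L^{2}$ convergence and, simultaneously, to verify that the limit $f_{I}$ belongs to $\I{h}$. This is exactly the content of Lemma \ref{key}: applied with the weight $\varphi$ of $h$ and with the holomorphic generators of $\I{h}$ on $B_{I}$ supplied by property (2) of the cover $\mathcal{U}$, it yields both $f_{I}\in H^{0}(B_{I},\I{h})$ and $\int_{K}|f^{(k)}_{I}-f_{I}|^{2}_{h}\to 0$ for every $K\Subset B_{I}$. Assembling the $f_{I}$ into a cochain $f:=\{f_{I}\}$ then gives $f\in C^{p}(\mathcal{U}, F\otimes\I{h})$ with $f^{(k)}\to f$ in every semi-norm $p_{K_{i_{0}...i_{p}}}$, which is completeness. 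A naive Fatou-type argument would only yield $|f_{I}|e^{-\varphi}\in L^{2}_{\mathrm{loc}}$ with weighted norm controlled by the liminf of the weighted norms of $f^{(k)}_{I}$, and not weighted-$L^{2}$ convergence; the local division theorem underlying Lemma \ref{key} is precisely what converts the uniform convergence of $f^{(k)}_{I}$ into the required weighted-$L^{2}$ convergence.
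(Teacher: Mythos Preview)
Your proof is correct and follows essentially the same approach as the paper: both reduce to locally uniform convergence of a Cauchy sequence of holomorphic sections and then invoke Lemma~\ref{key} (with the generators supplied by property~(2) of the cover) to obtain both membership of the limit in $\I{h}$ and weighted-$L^{2}$ convergence. The only cosmetic differences are that the paper extracts a locally uniformly convergent subsequence via Montel's theorem and then upgrades to full-sequence convergence using the Cauchy hypothesis, whereas you pass directly from $L^{2}$-Cauchy to uniformly Cauchy via the mean-value estimate; and you explicitly check metrizability and the Hausdorff property, which the paper leaves implicit.
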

\begin{proof}
For a given  Cauchy sequence $\big\{ \{\alpha_{k, i_{0}...i_{q}} \} \big\}_{k=1}^{\infty}$ 
in $C^{q}(\mathcal{U}, G\otimes \I{h})$,  
we put $\alpha_{k}:=\alpha_{k, i_{0}...i_{q}}$ and   
$B:=B_{i_{0}...i_{q}}$.  
Further we regard $\alpha_{k}$ as a holomorphic function on $B$. 
For the proof, it is sufficient to show that there exists 
a holomorphic function $\alpha$ on $B$ such that 
\begin{equation*}
\int_{K} |\alpha_{k}-\alpha|_{h}^{2}  \to 0
\end{equation*}
for every relatively compact set $K \Subset B$.

Since $\{\alpha_{k}\}_{k=1}^{\infty}$ is a Cauchy sequence 
with respect to the semi-norms, 
the $L^{2}$-norm $\int_{K} |\alpha_{k}|_{h}^{2}$ 
of $\alpha_{k}$ on $K$ is uniformly bounded. 
Since the local weight $\varphi$ of $h$ 
is quasi-psh, $\varphi$ is upper semi-continuous. 
In particular $\varphi$ is bounded above, 
and thus the $L^{2}$-norm $\int_{K} |\alpha|^{2}\, dV_{\omega}$ is also uniformly bounded. 
By Montel's theorem, 
there exists a subsequence $\{ \alpha_{k_{\ell}} \}_{\ell=1}^{\infty}$ 
of $\{ \alpha_{k} \}_{k=1}^{\infty}$ that uniformly 
converges to a holomorphic function $\alpha$ 
on every relatively compact set in $B$. 
Since this subsequence  $\{ \alpha_{k_{\ell}} \}_{\ell=1}^{\infty}$ 
satisfies the assumptions of Lemma \ref{key}, 
it can be shown that the limit $\alpha$ also belongs to $\I{h}$. 
Moreover,  we have   
\begin{equation*}
p_{K}(\alpha_{k_{\ell}} - \alpha)= 
\int_{K} |\alpha_{k_{\ell}} - \alpha|_{h}^{2} 
\to 0 
\end{equation*}
for every relatively compact set $K \Subset B$. 
Since $\{\alpha_{k}\}_{k=1}^{\infty}$ is a Cauchy sequence, 
the semi-norm $p_{K}(\alpha_{k} - \alpha)$ also converges to zero. 
\end{proof}

\subsection{De Rham-Weil isomorphisms}\label{S5-2}
In this subsection, we observe the construction of 
the De Rham-Weil isomorphism between 
the $\dbar$-cohomology and the $\rm{\check{C}}$ech cohomology in detail. 
The content of this subsection is essentially contained in \cite{Fuj12-A}. 

Let $\omega$ be a K\"ahler form on a compact K\"ahler manifold $X$ 
and $h$ be a singular metric on $F$ satisfying 
$\sqrt{-1}\Theta_{h}(F) \geq -a \omega$ for some constant $a>0$. 
Further let $Z$ be a subvariety on $X$ and 
let $\ome$ be a K\"ahler form on the Zariski open set $Y:=X \setminus Z$ 
with the following properties\,$:$
\begin{itemize}
\item[(B)] For every point $p$ in $X$, 
there exist an open neighborhood $B$ of $p$ and a bounded function $\Phi$ on $B$ such that $\ome =\deldel \Phi$ on $B \setminus Z$. 
\item[(C)] $\ome \geq \omega $.
\end{itemize} 
The important point is that $\ome$ locally admits 
a \lq \lq bounded" potential on a neighborhood of every point $p$ in $X$ (not $Y$). 
Note that the K\"ahler form $\ome$ constructed in Step \ref{St1} satisfies 
these properties. 
When we construct the De Rham-Weil isomorphism, 
we locally solve the $\dbar$-equation with $L^2$-estimate 
by using the following lemma.

\begin{lemm}[{cf.\,\cite[4.1 Th\'eor\`eme]{Dem82}}] \label{L2}
Under the same situation as above, 
we assume that $B$ is a Stein open set in $X$ with property $(B)$. 
Then, for an arbitrary 
$\alpha \in L^{n, q}_{(2)}(B\setminus Z, F)_{h, \ome}$ 
with $\dbar \alpha=0$, 
there exist $\beta \in L^{n, q-1}_{(2)}(B\setminus Z, F)_{h, \ome}$ 
and a positive constant $C$ $($depending only on $a$, $\Phi$, $q$$)$ 
such that 
\begin{align*}
 \dbar \beta &= \alpha, \\ 
 \int_{B\setminus Z} 
|\beta|^{2}_{h, \ome}\, dV_{\ome} 
& \leq C \int_{B\setminus Z} 
|\alpha|^{2}_{h, \ome}\, dV_{\ome}. 
\end{align*}
\end{lemm}

\begin{proof}
For a bounded function $\Phi$ on $B$ with $\ome=\deldel \Phi$, 
we define the metric $H$ on $F$ by 
$H:=h e^{-(1+a)\Phi}$. 
Then it follows that the curvature of $H$ satisfies 
\begin{align*}
\sqrt{-1}\Theta_{H}(F)&= \sqrt{-1}\Theta_{h}(F) + (1+a)\deldel \Phi  \geq -a \ome + (1+a)\ome  \geq  \ome 
\end{align*}
from  $\ome \geq \omega$ and 
$\sqrt{-1}\Theta_{h}(F) \geq -a\omega$.  
The $L^{2}$-norm $\|\alpha \|_{H, \ome}$ with respect to $H$ is finite 
since the function $\Phi $ is bounded and $\|\alpha \|_{h, \ome}$ is finite. 
We remark that $\ome$ is not a complete form on $B \setminus Z$, but 
$B \setminus Z$  admits a complete K\"ahler form. 
Therefore, from the standard $L^{2}$-method for the $\dbar$-equation 
(for example see \cite[4.1 Th\'eor\`eme]{Dem82}), 
we obtain a solution $\beta$ of the $\dbar$-equation 
$\dbar \beta =\alpha$ with  
\begin{equation*}
\|\beta \|^{2}_{H,\ome} 
\leq \frac{1}{q} \|\alpha \|^{2}_{H,\ome}.  
\end{equation*}
By putting $C_{1}:=\inf_{B} e^{-(a+1)\Phi}$ and 
$C_{2}:=\sup_{B} e^{-(a+1)\Phi}$, 
we have 
\begin{equation*}
C_{1} \|\beta \|^2_{h, \ome} \leq  \|\beta \|^2_{H, 
\ome}\quad  \text{and} \quad 
\|\alpha \|^2_{H, \ome} \leq C_{2} 
\|\alpha \|^2_{h, \ome}. 
\end{equation*}
These inequalities lead to the $L^{2}$-estimate in the lemma. 
\end{proof}

From now on, we fix a Stein finite cover 
$\mathcal{U}:=\{B_{i}\}_{i \in I}$ of $X$
such that $\ome$ admits a bounded potential function on $B_{i}$, 
and we consider the space of $q$-cochains 
$C^{q}(\mathcal{U}, K_{X}\otimes F \otimes \I{h})$ with coefficients in  
$K_{X}\otimes F \otimes \I{h}$  
with the topology induced by the semi-norms 
$p_{K_{i_{0}...i_{q}}}(\bullet)$ 
defined as follows: 
For every $\alpha=\{\alpha_{i_{0}...i_{q}}\} \in 
C^{q}(\mathcal{U}, K_{X} \otimes F \otimes \I{h})$ and 
a relatively compact set 
$K_{i_{0}...i_{q}} \Subset B_{i_{0}...i_{q}}$, 
the semi-norm  
$p_{K_{i_{0}...i_{q}}}(\alpha)$ of $\alpha$ is defined by 
\begin{equation*}
p_{K_{i_{0}...i_{q}}}(\alpha)^{2}:=
\int_{K_{i_{0}...i_{q}}} 
|\alpha_{i_{0}...i_{q}}|_{h, \omega}^{2}\, dV_{\omega}.  
\end{equation*}
This semi-norm is independent of 
the choice of $\omega$ by Lemma \ref{cano}. 
We remark that it is a Fr\'echet space 
(that is, it is complete with respect to these semi-norms)
by Theorem \ref{fre}. 
In this subsection, 
we observe the following De Rham-Weil isomorphism\,$:$ 
\begin{align*}
\dfrac{{\rm{Ker}}\, \dbar}{{\rm{Im}}\, \dbar} 
\text{ of } L^{n,q}_{(2)}(Y, F)_{h, \ome} 
\xrightarrow{\quad \cong \quad}
&\check{H}^{q}(\mathcal{U}, K_{X}\otimes F \otimes \I{h})\\
:=&
\dfrac{{\rm{Ker}}\, \delta}{{\rm{Im}}\, \delta} 
\text{ of } C^{q}(\mathcal{U}, K_{X}\otimes F \otimes \I{h}).  
\end{align*}
Here $\delta$ is the coboundary operator defined as follows\,: 
For every $q$-cochain 
$\{ \alpha_{i_{0}...i_{q}} \}_{i_{0}...i_{q}}$  
\begin{equation*}
\delta (\{ \alpha_{i_{0}...i_{q}} \}_{i_{0}...i_{q}} ):= 
\{ \sum_{\ell = 0}^{q+1} (-1)^{\ell} 
\alpha_{  i_{0}...\hat{i_{\ell}}...i_{q+1} }
\ |_{B_{i_{0}...i_{q+1}} } \}_{i_{0}...i_{q+1}},   
\end{equation*}  
where $B_{i_{0}...i_{q+1}} := B_{i_{0}}\cap...\cap B_{i_{q+1}}$.  
We will omit the notation of the restriction 
in the right hand side.

\begin{prop}\label{DWiso}
Under the same situation as above, 
there exist continuous maps    
\begin{align*}
f: {\rm{Ker}}\, \dbar \text{ in } L^{n,q}_{(2)}(Y, F)_{h, \ome} 
\rightarrow 
{\rm{Ker}}\, \delta \text{ in } 
C^{q}(\mathcal{U}, K_{X} \otimes F \otimes \I{h}) \\
g: {\rm{Ker}}\, \delta \text{ in } 
C^{q}(\mathcal{U}, K_{X} \otimes F \otimes \I{h})
\rightarrow 
{\rm{Ker}}\, \dbar \text{ in } L^{n,q}_{(2)}(Y, F)_{h, \ome} 
\end{align*}
satisfying the following properties\,$:$ 
\begin{itemize}
\item[$\bullet$] $f$ induces the isomorphism 
\begin{align*}
\overline{f} \colon \dfrac{{\rm{Ker}}\, \dbar}{{\rm{Im}}\, \dbar} 
\text{ of } L^{n,q}_{(2)}(Y, F)_{h, \ome}  
\xrightarrow{\quad \cong \quad }
\dfrac{{\rm{Ker}}\, \delta}{{\rm{Im}}\, \delta} 
\text{ of } C^{q}(\mathcal{U}, K_{X}\otimes F \otimes \I{h}).  
\end{align*}
\item[$\bullet$] $g$ induces the isomorphism 
\begin{align*}
\overline{g} \colon \dfrac{{\rm{Ker}}\, \delta}{{\rm{Im}}\, \delta} 
\text{ of } C^{q}(\mathcal{U}, K_{X}\otimes F \otimes \I{h})
\xrightarrow{\quad \cong \quad }
\dfrac{{\rm{Ker}}\, \dbar}{{\rm{Im}}\, \dbar} 
\text{ of } L^{n,q}_{(2)}(Y, F)_{h, \ome}.  
\end{align*}
\item[$\bullet$] $\overline{f}$ is the inverse map of $\overline{g}$. 
\end{itemize}
\end{prop}

\begin{proof}
We first define $f(U)\in {\rm{Ker}}\, \delta \subset C^{q}(\mathcal{U}, K_{X} \otimes F \otimes \I{h})$ 
for a given $U\in {\rm{Ker}}\, \dbar \subset L^{n,q}_{(2)}(Y, F)_{h, \ome}$.

For the $0$-cochain $\alpha^{0}:=\{ \alpha_{i_{0}} \}$ defined 
by $\alpha_{i_{0}}:=U|_{B_{i_{0}}\setminus Z}$, 
by applying Lemma \ref{L2} to $\alpha_{i_{0}}$, 
it is shown that 
the $\dbar$-equation $\dbar \beta_{i_{0}}= \alpha_{i_{0}}$ 
on ${B_{i_{0}}\setminus Z}$ has a solution.  
The solution $\beta_{i_{0}}$ whose $L^{2}$-norm 
$\|\beta_{i_{0}}\|_{h, \ome}$ is minimum among all solutions 
satisfies the $L^2$-estimate 
$\|\beta_{i_{0}}\|^2_{h, \ome} \leq 
C \|\alpha_{i_{0}}\|^2_{h, \ome}  \leq C \|U\|^2_{h, \ome}$. 
Here the constant $C$ does not depend on $h$ and $U$. 
In the proof $C$ denotes 
(possibly) different positive constants independent of $h$ and $U$.

For the $1$-cochain $\alpha^1$ defined by 
$\alpha^{1}:=\{\alpha_{i_{0}i_{1}} \}:=\delta \{\beta_{i_{0}} \}$, 
we have 
\begin{align*}
\| \alpha^{1} \|^{2}_{h, \ome}:=& 
\sum_{i_{0}, i_{1} \in I} \int_{B_{i_{0}i_{1}} \setminus Z} 
|\alpha_{i_{0}i_{1}}|^{2}_{h, \ome}\, dV_{\ome}\\
\leq& 
\sum_{i_{0}, i_{1} \in I} (\|\beta_{i_{0}}\|_{h, \ome}+
\|\beta_{i_{1}}\|_{h, \ome})^2
\leq C \|U\|^2_{h, \ome}
\end{align*}
for some constant $C$. 
Further we have 
$$\dbar \alpha^1=\dbar \delta \{\beta_{i_{0}} \}=
\delta \dbar \{\beta_{i_{0}} \}=\delta \{\alpha_{i_{0}}\}=0. 
$$
Therefore, 
by applying Lemma \ref{L2} to $\alpha^{1}$ again, 
we can take the solution of the $\dbar$-equation 
$\dbar \beta_{i_{0}i_{1}}= \alpha_{i_{0}i_{1}}$ on ${B_{i_{0}i_{1}}\setminus Z}$ 
whose $L^{2}$-norm $\|\beta_{i_{0}i_{1}}\|_{h, \ome}$ is minimum 
among all solutions. 
Note that we have 
$ 
\|\beta_{i_{0}i_{1}}\|^2_{h, \ome} \leq C \|\alpha_{i_{0}i_{1}}\|^2_{h, \ome}$. 
Similarly, by putting $\alpha^2:=\{\alpha_{i_{0}i_{1}i_{2}}\}:=\delta \{\beta_{i_{0}i_{1}}\}$, 
we can check  
$$
\| \alpha^{2} \|^{2}_{h, \ome}:= 
\sum_{i_{0}, i_{1} i_{2}\in I} \int_{B_{i_{0}i_{1}i_{2}} \setminus Z} 
|\alpha_{i_{0}i_{1}i_{2}}|^{2}_{h, \ome}\, dV_{\ome}
\leq C \|U\|^2_{h, \ome}. 
$$
By repeating this process, 
we can obtain the $k$-cochain $\{\beta_{i_{0}\dots i_{k}} \}$ 
with coefficients in the $F$-valued $(n, q-k-1)$-forms with 
the following equalities\,$:$
\[
  (*) \left\{ \quad
  \begin{array}{ll}
\vspace{0.2cm}
\dbar \beta_{i_{0}} &=U |_{B_{i_{0}}\setminus Z},  \\
\dbar \{ \beta_{i_{0}i_{1}} \}&=\delta  \{\beta_{i_{0}}\},  \\
\dbar \{ \beta_{i_{0}i_{1}i_{2}} \}&=\delta   \{\beta_{i_{0}i_{1}}\},  \\
 & \vdots   \\
\dbar \{ \beta_{i_{0}\dots i_{q-1}} \}&=\delta   \{\beta_{i_{0}\dots i_{q-2}}\}.
  \end{array} \right.
\]
It follows that 
$$
\|\beta_{i_{0}\dots i_{k}}\|^2_{h,\ome}\leq C \|U\|^2_{h,\ome} 
$$
from the construction. 
Now $\alpha^{q}:=\{\alpha_{i_{0}\dots i_{q}}\}:=\delta \{\beta_{i_{0}\dots i_{q-1}}\}$ is a $q$-cocycle with coefficients in  
the $F$-valued $(n,0)$-forms and 
satisfies 
$$
\dbar \alpha^{q}=\dbar \delta \{\beta_{i_{0}\dots i_{q-1}}\} 
=\delta  \dbar \{ \beta_{i_{0}\dots i_{q-1}}\} 
=\delta \delta \{ \beta_{i_{0}\dots i_{q-2}}\}=0. 
$$
In particular, $\alpha_{i_{0}...i_{q}}$ can be regarded as 
a holomorphic function with bounded $L^2$-norm  
since it is a $\dbar$-closed $F$-valued $(n,0)$-form and 
it satisfies  
$\|\alpha_{i_{0}...i_{q}}\|_{h, \omega}= 
\|\alpha_{i_{0}...i_{q}}\|_{h, \ome}<\infty$ by Lemma \ref{cano}. 
Then $\alpha_{i_{0}...i_{q}}$ can be extended  
from $B_{i_{0}\dots i_{q}} \setminus Z$ 
to the $\dbar$-closed $F$-valued $(n,0)$-form on $B_{i_{0}\dots i_{q}}$ 
by the Riemann extension theorem. 
Therefore it determines 
$\alpha^{q}\in {\rm{Ker}}\, \delta \subset C^{q}(\mathcal{U}, K_{X}\otimes F \otimes \I{h})$. 
We define $f(U)$ by $f(U):=\alpha^{q}=\delta \{ \beta_{i_{0}\dots i_{q-1}} \}$. 
It follows that $f$ is continuous 
from the construction of $f$ and the $L^{2}$-estimate 
$$
\| f(U) \|^{2}_{h, \ome} = 
\sum_{i_{0}\dots i_{q}\in I} \int_{B_{i_{0}\dots i_{q}} \setminus Z} 
|\alpha_{i_{0}\dots i_{q}}|^{2}_{h, \ome}\, dV_{\ome}
\leq C \|U\|^2_{h, \ome}.
$$

Next we define 
$g(\alpha^{q}) \in {\rm{Ker}}\, \dbar \subset L^{n,q}_{(2)}(Y, F)_{h, \ome}$ 
for a given 
$\alpha^{q}=\{\alpha_{i_{0}\dots i_{q}} \} \in {\rm{Ker}}\, \delta \subset C^{q}(\mathcal{U}, K_{X} \otimes F \otimes \I{h})$. 
For the $(q-1)$-cochain  
$\alpha^{q-1}:=\{\alpha_{i_{0}\dots i_{q-1}} \}$ defined by 
$$
\alpha_{i_{0}\dots i_{q-1}}:= 
\sum_{k \in I} \rho_{k}\alpha_{k i_{0}\dots i_{q-1}}\, , 
$$
we can easily check $\delta \alpha^{q-1}=\alpha^{q}$ 
from $\delta \alpha^{q}=0$, 
and thus we have 
$\delta \dbar \alpha^{q-1}= \dbar \delta \alpha^{q-1}= \dbar \alpha^{q}=0$. 
When 
we define $\alpha^{q-2}:=\{\alpha_{i_{0}\dots i_{q-2}} \}$ by 
$$
\alpha_{i_{0}\dots i_{q-2}}:= 
\sum_{k \in I} \rho_{k} \dbar \alpha_{k i_{0}\dots i_{q-2}}\, , 
$$
we can easily check $\delta \alpha^{q-2}=\dbar \alpha^{q-1}$ 
from $\delta\dbar \alpha^{q-1}=0$ again. 
By repeating this process, 
we obtain the $k$-cochain $\alpha^{k}$ with coefficients in  
the $F$-valued $(n,q-k-1)$-forms. 
Then $\dbar \alpha^{0}$ determines 
the $\dbar$-closed $F$-valued form globally defined on $X$ 
by $\delta \dbar \alpha^{0}=\dbar \delta \alpha^{0}=\dbar \dbar \alpha^{1}=0$. 
We define $g(\alpha^{q})$ by $g(\alpha^{q}):=\dbar \alpha^{0}$. 
The properties in Proposition \ref{DWiso} can be proved  
by the standard argument, and thus we omit it. 
\end{proof}

\begin{rem}\label{const}
(1) 
The map $g$ is linear, but $f$ is not linear. 
This is because the norm of $\beta_{1}+\beta_{2}$ is not necessarily minimum 
even if $\beta_{i}$ is the solution of $\beta_{i}=\dbar \alpha_{i}$ whose $L^2$-norm is minimum. 
The induced maps $\overline{f}$ and $\overline{g}$ are linear.\\
(2) 
For the proof of Theorem \ref{cru}, we remark that
\begin{align*}
& g(\alpha^{q})\\
=&\dbar \Bigg(  \sum_{k_{q}\in I}\rho_{k_{q}}
\dbar \bigg( \sum_{k_{q-1}\in I}\rho_{k_{q-1}} \cdots
\dbar \Big( \sum_{k_{3}\in I}\rho_{k_{3}}
\dbar \big( \sum_{k_{2}\in I}\rho_{k_{2}}
\dbar (\sum_{k_{1}\in I}\rho_{k_{1}} \alpha_{k_{1}...k_{q}i_{0}})
\big)\Big)\bigg) \Bigg)\\
=&
\sum_{k_{q}\in I} \dbar \rho_{k_{q}}\wedge
\sum_{k_{q-1}\in I} \dbar \rho_{k_{q-1}} \wedge \cdots
\sum_{k_{3}\in I} \dbar \rho_{k_{3}} \wedge
\sum_{k_{2}\in I} \dbar \rho_{k_{2}} \wedge
\dbar (\sum_{k_{1}\in I}\rho_{k_{1}} \alpha_{k_{1}...k_{q}i_{0}})
\end{align*}
holds on $B_{i_{0}}$ by the construction and the Leibnitz rule.
\end{rem}

Proposition \ref{DWiso} leads to the following lemma and proposition.

\begin{lemm}\label{lim}
Under the same situation as above, 
the space of $q$-cocycles 
$Z^{q}(\mathcal{U}, K_{X}\otimes F\otimes \I{h})
:={\rm{Ker}}\, \delta$
and the space of $q$-coboundaries 
$B^{q}(\mathcal{U}, K_{X}\otimes F \otimes \I{h})
:={\rm{Im}}\, \delta$ are 
closed subspaces in 
$C^{q}(\mathcal{U}, K_{X}\otimes F \otimes \I{h})$ 
$($in particular Fr\'echet spaces$)$.  
\end{lemm}
\begin{proof}
It is easy to check that 
the coboundary operator $\delta$  
from $C^{q}(\mathcal{U}, K_{X} \otimes 
F \otimes \I{h})$ to 
$C^{q+1}(\mathcal{U}, K_{X} \otimes 
F \otimes \I{h})$ is continuous. 
It implies that  
${\rm{Ker}}\, \delta$ 
is a closed subspace. 
Now we consider 
the following coboundary operator\,$:$ 
\begin{equation*}
\delta: C^{q-1}(\mathcal{U}, K_{X}\otimes F\otimes \I{h}) \to
Z^{q}(\mathcal{U}, K_{X}\otimes F\otimes \I{h}).
\end{equation*}
The cokernel of $\delta$ is isomorphic to 
$H^{p}(X, K_{X}\otimes F \otimes \I{h})$, 
whose dimension is finite.  
The open mapping theorem implies that 
${\rm{Im}}\, \delta$ is a closed subspace  
(see Proposition \ref{Fred}). 
\end{proof}

\begin{prop}\label{closed}
Under the same situation as above, 
the ranges ${\rm{Im}}\, \dbar \subset L^{n,q}_{(2)}(Y, F)_{h, \ome} $ 
and 
${\rm{Im}}\, \dbar^{*} \subset L^{n,q}_{(2)}(Y, F)_{h, \ome} $ 
are closed subspaces in  
$L^{n,q}_{(2)}(Y, F)_{h, \ome}$. 
\end{prop}
\begin{proof}
For a given sequence $\{U_{k}\}_{k=1}^{\infty}$ in ${\rm{Im}}\, \dbar$ 
that converges to some $U$, 
it is shown that $\overline{f}([U_{k}])$ converges to $\overline{f}([U])$ 
from Proposition \ref{DWiso}. 
Here $[\bullet]$ denotes the $\dbar$-cohomology class. 
We have $\overline{f}([U])=0$ from $\overline{f}([U_{k}])=0$ 
since the $\rm{\check{C}}$ech cohomology is a separated topological space by Lemma \ref{lim}. 
It follows that $U \in {\rm{Im}}\, \dbar$ since 
$\overline{f}$ is an isomorphism. 
It is shown that  
${\rm{Im}}\, \dbar^{*} $ 
is also closed from this fact (see \cite[Theorem\,1.1]{Hor}).
\end{proof}

\subsection{Proof of the key theorem}\label{S5-3}
In this subsection, we prove the following theorem\,$:$
\begin{theo}\label{cru}
Under the same situation as in subsection \ref{S5-2}, 
we consider a family of singular metrics $\{h_{\e}\}_{1 \gg \e >0}$ on $F$
with $h_{\e}\leq h$, $\I{h_{\e}}=\I{h}$, 
and $\sqrt{-1}\Theta_{h_{\e}}(F)\geq -\omega$. 
Then, for $U_{\e} \in {\rm{Im}}\,\dbar \subset 
L^{n,q}_{(2)}(Y, F)_{h_{\e}, \ome} $ such that   
the $L^{2}$-norm $\|U_{\e}\|_{h_{\e}, \ome}$ is uniformly bounded, 
there exists an $F$-valued $(n,q-1)$-form $V_{\e}$ 
with the following properties\,$:$
\begin{equation*}
{\rm{(1)}}\hspace{0.2cm} \dbar V_{\e}=U_{\e}.   
\quad 
{\rm{(2)}}\hspace{0.2cm} \text{The norm }
\|V_{\e}\|_{h_{\e}, \ome}  
 \text{ is uniformly bounded}. 
\end{equation*}
\end{theo}
\begin{proof}
The strategy of the proof is as follow\,$:$   
The main idea of the proof is to convert the $\dbar$-equation 
$\dbar V_{\e}=U_{\e}$
to the equation $\delta \gamma_{\e} =f_{\e}(U_{\e})$ of 
the coboundary operator $\delta$
in the space of cochains 
$C^{\bullet}(K_{X}\otimes F \otimes \I{h_{\e}})$, 
by using the $\rm{\check{C}}$ech complex and pursuing  
the De Rham-Weil isomorphism. 
Here $f_{\e}$ is the continuous map constructed for $h_{\e}$ 
in Proposition \ref{DWiso}. 
The important point is that 
$C^{\bullet}(K_{X}\otimes F \otimes \I{h_{\e}})$ is independent of 
$\e$ thanks to the property of $h_{\e}$
although the $L^{2}$-space 
$L^{n,\bullet}_{(2)}(Y, F)_{h_{\e}, \ome}$ depends on $\e$. 
Since $\|U_{\e}\|_{h_{\e}, \ome}$ is uniformly bounded, 
it is proven that $f_{\e}(U_{\e})$ converges to 
some $q$-coboundary in $C^{q}(K_{X}\otimes F \otimes \I{h})$ 
with the topology induced by 
the local $L^{2}$-norms with respect to $h$.
Further it is shown that 
the coboundary operator $\delta$ is an open map.
Then, by these observations, we can construct a solution $\gamma_{\e}$ of 
the equation $\delta \gamma_{\e} = f_{\e}(U_{\e})$ 
with uniformly bounded norm. 
Finally, by using a partition of unity 
(the map $g_{\e}$ constructed in Proposition \ref{DWiso}), 
we conversely 
construct $V_{\e} \in L^{n,q}_{(2)}(Y, F)_{h_{\e}, \ome}$
with the properties in Theorem \ref{cru}.

\vspace{0.2cm}
Fix a Stein finite cover $\mathcal{U}:=\{B_{i}\}_{i \in I}$ of $X$ 
such that $\ome$ admits a bounded potential function on $B_{i}$. 
By applying the argument in Proposition \ref{DWiso} to $U_{\e}$, 
we can obtain $\{\beta_{\e, i_{0}\dots i_{k}}\}$ satisfying equality ($*$). 
By the assumption of $h_{\e}$, 
we have 
\begin{align*}
\alpha^{q}_{\e}:=f_{\e}(U_{\e}):=\delta \{\beta_{\e, i_{0}\dots i_{q-1}}\} 
\in &C^{q}(\mathcal{U}, K_{X}\otimes F\otimes \I{h_{\e}})\\
=&C^{q}(\mathcal{U}, K_{X}\otimes F\otimes \I{h}).
\end{align*}
Then we prove the following claim. 
\begin{claim}\label{suq}
In the above situation, we have the following\,$:$
\begin{itemize} 
\item[$\bullet$] $f_{\e}(U_{\e}) \in {\rm{Im}}\, \delta \subset 
C^{q}(\mathcal{U}, K_{X}\otimes F\otimes \I{h})$ for every $\e>0$.
\item[$\bullet$] $\{f_{\e}(U_{\e}) \}_{\e>0}$ has a subsequence that  
converges to a $q$-cochain $\alpha^{q}$. 
\item[$\bullet$] The limit $\alpha^{q} \in {\rm{Im}}\, \delta \subset 
C^{q}(\mathcal{U}, K_{X}\otimes F\otimes \I{h})$. 
\end{itemize}
\end{claim}
\begin{proof}[Proof of Claim \ref{suq}]
It follows that $f_{\e}(U_{\e}) \in {\rm{Im}}\, \delta \subset 
C^{q}(\mathcal{U}, K_{X}\otimes F\otimes \I{h_{\e}})$ 
from the assumption $U_{\e} \in {\rm{Im}}\, \dbar 
\subset L_{(2)}^{n,q}(Y, F)_{h_{\e}, \ome}$ and Proposition \ref{DWiso}. 
By the assumption of $h_{\e}$, we obtain the first conclusion.

Now we prove that each component $\alpha_{\e, i_{0}\dots i_{q}}$ of $f_{\e}(U_{\e})$ 
has a subsequence that converges to some $F$-valued $(n,0)$-form. 
By the construction of $\alpha^{q}_{\e}=f_{\e}(U_{\e})=\delta(\{\beta_{\e, i_{0}\dots i_{q-1}}\})$, 
we have 
$$\|\alpha_{\e, i_{0}\dots i_{q}} \|^{2}_{h_{\e},\ome}
\leq \|f_{\e}(U_{\e}) \|^{2}_{h_{\e},\ome}\leq 
C \|U_{\e}\|^{2}_{h_{\e},\ome}. 
$$
Here the above constant $C$ does not depend on $U_{\e}, h_{\e}$, 
and thus the right hand side can be estimated by a constant 
independent of $\e$. 
In particular, $\alpha_{\e, i_{0}\dots i_{q}}$ 
can be regarded as a holomorphic function with uniformly bounded $L^2$-norm.  
(Note that $\alpha_{\e, i_{0}\dots i_{q}}$ is a $\dbar$-closed $F$-valued $(n,0)$-form). 
Then the sup-norm $\sup_{K_{i_{0}...i_{q}}} |\alpha_{\e, i_{0}...i_{q}}|$ 
is also uniformly bounded for every relatively compact set 
$K_{i_{0}...i_{q}} \Subset B_{i_{0}...i_{q}}$. 
Therefore, by Montel's theorem, we obtain 
a subsequence of $\{\alpha_{\e, i_{0}...i_{q}}\}_{\e>0}$ 
that uniformly converges to 
some $F$-valued $(n,0)$-form $\alpha_{i_{0}...i_{q}}$ 
on every relatively compact set in $B_{i_{0}...i_{q}}$. 
Lemma \ref{key} asserts that 
this subsequence converges to $\alpha_{i_{0}...i_{q}}$ 
with respect to the semi-norms 
$\{ p_{K_{i_{0}...i_{q}}}(\bullet)\}_{K_{i_{0}...i_{q}} \Subset B_{i_{0}...i_{q}}}$. 
Hence we can find 
a subsequence of $f_{\e}(U_{\e})$ 
that converges to some $q$-cochain 
$\alpha^{q}$ in $C^{q}(\mathcal{U}, K_{X} \otimes F \otimes \I{h})$.  
The latter conclusion follows from Lemma \ref{lim}
\end{proof}

We will construct a solution $\gamma_{\e}$ of 
the equation $\delta \gamma_{\e} = f_{\e}(U_{\e})$ 
with uniformly bounded norm. 
For simplicity we use the same notation $\{f_{\e}(U_{\e})\}_{\e>0}$ 
for the subsequence obtained in Claim \ref{suq}. 
Note that the space of $q$-coboundaries 
$B^{q}(\mathcal{U}, K_{X}\otimes F\otimes \I{h}):= {\rm{Im}}\,\delta$ 
is also a Fr\'echet space by Lemma \ref{lim}.
The following coboundary operator 
\begin{equation*}
\delta: C^{q-1}(\mathcal{U}, K_{X}\otimes F\otimes \I{h}) \to 
B^{q}(\mathcal{U}, K_{X}\otimes F\otimes \I{h})
\end{equation*}
is continuous and surjective linear map 
between Fr\'echet spaces, 
and thus  
this coboundary operator is an open map by the open mapping theorem.

By Claim \ref{suq}, there exists  
$\gamma \in C^{q-1}(\mathcal{U}, K_{X}\otimes F \otimes \I{h})$ 
such that $\delta \gamma = \alpha^{q}$. 
For a given family $K:=\{K_{i_{0}...i_{q-1}}\}$ 
of relatively compact sets 
$K_{i_{0}...i_{q-1}} \Subset B_{i_{0}...i_{q-1}}$, 
we define the open bounded neighborhood $\Delta_{K} $ of $\gamma$ in 
$C^{q-1}(\mathcal{U}, K_{X}\otimes F \otimes \I{h})$ 
by 
\begin{equation*}
\Delta_{K}:=\{ \beta \in C^{q-1}(\mathcal{U}, K_{X}\otimes F \otimes \I{h}) \mid \ p_{K_{i_{0}...i_{q-1}}}(\beta-\gamma) < 1 \}. 
\end{equation*}
Then $\delta (\Delta_{K})$ is an open neighborhood of the limit $\alpha^{q}$ in 
$B^{q}(\mathcal{U}, K_{X}\otimes F\otimes \I{h})$ 
by the above observation. 
Therefore $f_{\e}(U_{\e})$ belongs to 
$\delta (\Delta_{K})$ for a sufficiently small $\e>0$  
since $f_{\e}(U_{\e})$ converges to $\alpha^{q}$. 
By the definition of $\Delta_{K}$, 
we can obtain $\gamma_{\e}=:\{\gamma_{\e, i_{0}...i_{q-1}}\} \in 
C^{q-1}(\mathcal{U}, K_{X}\otimes F \otimes \I{h})$ 
such that 
\begin{align}\label{del-hol}
&\delta \gamma_{\e}= f_{\e}(U_{\e}), \\
\label{hol-est}
&p_{K_{i_{0}...i_{q-1}}}(\gamma_{\e})^{2}= 
\int_{K_{i_{0}...i_{q-1}}} 
|\gamma_{\e, i_{0}...i_{q-1}}|^{2}_{h, \omega}\ \omega^{n}
 \leq C_{K} 
\end{align}
for some positive constant $C_{K}$. 
The above constant $C_{K}$ depends on the choice of $K$, $\gamma$, 
but does not depend on $\e$.

We will construct an $F$-valued $(n,q)$-form $V_{\e}$ 
from $\gamma_{\e}$ and $f_{\e}(U_{\e})$ by using $g_{\e}$. 
The strategy is as follows\,$:$
It follows that $g_{\e}(\delta \gamma_{\e})=\dbar v_{\e}$ and 
$g_{\e}(f_{\e}(U_{\e}))=U_{\e}+\dbar \widetilde{v}_{\e}$ 
for some $v_{\e}$ and $\widetilde{v}_{\e}$ 
since $\overline{g}_{\e}$ gives the isomorphism in Proposition \ref{DWiso}. 
On the other hand, we have $U_{\e}=\dbar(v_{\e} - \widetilde{v}_{\e})$
by equality (\ref{del-hol}).  
Then we can concretely compute $v_{\e}$ and $\widetilde{v}_{\e}$ 
by using a partition of unity $\{\rho_{i}\}_{i\in I}$,  
and thus we obtain the $L^2$-estimate for them.

\begin{claim}\label{sol-2}
There exists an $F$-valued $(n, q-1)$-form $v_{\e}$ 
on $X$ satisfying the 
following properties\,$:$ 
\begin{equation*}
{\rm{(1)}}\hspace{0.2cm} \dbar v_{\e}=g_{\e}(\delta \gamma_{\e} ).   
\quad  
{\rm{(2)}}\hspace{0.2cm} \text{The norm }
\|v_{\e} \|_{h, \ome} 
 \text{ is uniformly bounded}. 
\end{equation*}
\end{claim}
\begin{proof}
We observe 
\begin{equation*}
\gamma_{\e,  k_{2}...k_{q}i_{0}} + 
\sum_{\ell = 2}^{q} (-1)^{\ell-1} 
\gamma_{\e,  k_{1}...\hat{k_{\ell}}...k_{q}i_{0} } + 
(-1)^{q} 
\gamma_{\e,  k_{1}...k_{q} }.
\end{equation*} 
and the construction of $g_{\e}$ (see Remark \ref{const}).  
\\
{\bf{[Argument 1]}}
\vspace{0.1cm}\\
First we consider the first term $\gamma_{\e,  k_{2}...k_{q}i_{0}}$. 
It is easy to see 
\begin{equation*}
\dbar  \sum_{k_{1}\in I}\rho_{k_{1}}
\gamma_{\e,  k_{2}...k_{q}i_{0}}= 
\dbar  \gamma_{\e,  k_{2}...k_{q}i_{0}}  =0   
\end{equation*}
since $\gamma_{\e,  k_{2}...k_{q}i_{0}}$ does not depend on $k_{1}$. 
Here we used $ \sum_{k_{1}\in I}\rho_{k_{1}}=1$. 
We can conclude  
that this term does not affect $g_{\e}(\delta \gamma_{\e})$ from Remark \ref{const}. 
\\
{\bf{[Argument 2]}}
\vspace{0.1cm}\\
Secondly we consider the second term $\gamma_{\e,  k_{1}...\hat{k_{\ell}}...k_{q}i_{0} }$. 
For an integer $\ell$ with $2 \leq \ell \leq q$, by the Leibniz rule,  
we can show 
\begin{align*}
&\sum_{k_{q}\in I}
\dbar \rho_{k_{q}} \wedge 
\cdots \wedge \sum_{k_{\ell}\in I} \dbar  \rho_{k_{\ell}} 
\wedge \sum_{k_{\ell-1}\in I} \dbar  \rho_{k_{\ell-1}}\wedge
\cdots  \wedge 
\sum_{k_{1}\in I} \dbar  (\rho_{k_{1}} 
\gamma_{\e,  k_{1}..\hat{k_{\ell}}..k_{q-1}i_{0} })\\
=&\sum_{k_{q}\in I}
\dbar \rho_{k_{q}} \wedge 
\cdots \wedge  \dbar \sum_{k_{\ell-1}\in I} \dbar  \rho_{k_{\ell-1}}\wedge
\cdots  \wedge 
\sum_{k_{1}\in I} \dbar  (\rho_{k_{1}} 
\gamma_{\e,  k_{1}..\hat{k_{\ell}}..k_{q-1}i_{0} })=0. 
\end{align*}
Here we used $\dbar\, \dbar = 0$ and $\sum_{k_{\ell}\in I}\rho_{k_{\ell}} =1$. 
Therefore the second term does not affect $g_{\e}(\delta \gamma_{\e})$. 
\\
{\bf{[Argument 3]}}\\
Finally we consider the third term 
$(-1)^q \gamma_{\e,  k_{1}...k_{q} }$. 
If $v_{\e}$ is defined by  
\begin{align*}
v_{\e}: =& (-1)^{q} \sum_{k_{1}, \dots, k_{q} \in I}
\rho_{k_{q}}\dbar \rho_{k_{q-1}}\wedge \dbar \rho_{k_{q-1}}\wedge
\cdots \wedge \dbar \rho_{k_{2}} \wedge \dbar (\rho_{k_{1}} \wedge \gamma_{\e,  k_{1}...k_{q} })\\
=& (-1)^{q} \sum_{k_{1}, \dots, k_{q} \in I}
\rho_{k_{q}}\dbar \rho_{k_{q-1}}\wedge \dbar \rho_{k_{q-1}}\wedge
\cdots \wedge \dbar \rho_{k_{2}} \wedge \dbar \rho_{k_{1}} \wedge \gamma_{\e,  k_{1}...k_{q} },  
\end{align*}
then $v_{\e}$ determines 
the $F$-valued $(n,q-1)$-form on $X$  
since $ \gamma_{\e,  k_{1}...k_{q}}$ 
is independent of $i_{0}$. 
The second equality follows from the Leibnitz rule and 
$\dbar \gamma_{\e,  k_{1}...k_{q}}=0$. 
We have $g_{\e}(\delta \gamma_{\e})= \dbar v_{\e}$ 
by the definition of $v_{\e}$ 
and Argument 1, 2. 
For the proof, it is sufficient to show that 
the norm $\|v_{\e} \|_{h, \ome}$ is uniformly bounded. 
When we define the $(0,q-1)$-form 
$\eta_{k_{1}...k_{q}}$ on $X$ 
by 
\begin{equation*}
\eta_{k_{1}...k_{q}}:=
\rho_{k_{q}}
\dbar \rho_{k_{q-1}} \wedge
\dbar \rho_{k_{q-2}} \wedge
\cdots \wedge
\dbar \rho_{k_{1}},    
\end{equation*}
we have 
 \begin{align*}
v_{\e} = \sum_{k_{1},\dots,k_{q} \in I} \eta_{k_{1}...k_{q}} \wedge \gamma_{\e,  k_{1}...k_{q} }.    
\end{align*} 
Since the support of $\eta_{k_{1}...k_{q}}$ is relatively compact in 
$B_{k_{1}...k_{q}}$, 
there exists $K:=\{K_{k_{1}...k_{q}} \}$ such that 
${\rm{Supp}}\ \eta_{k_{1}...k_{q}} \Subset K_{k_{1}...k_{q}}
\Subset B_{k_{1}...k_{q}}$. 
For the family $K=\{K_{k_{1}...k_{q}} \}$, 
we may assume that the $q$-cochain $\gamma_{\e}$ satisfies  
inequality (\ref{hol-est}). 
By Lemma \ref{rho}, there exists 
a positive constant $C>0$  
such that 
\begin{align*}
| v_{\e} |_{h, \ome} \leq \sum_{k_{1},...,k_{q} \in I}
|\eta_{k_{1}...k_{q}} \wedge \gamma_{\e,  k_{1}...k_{q} } |_{h, \ome} 
\leq C \sum_{ k_{1},...,k_{q} \in I}  \chi_{K_{k_{1}...k_{q}}} |\gamma_{\e,  k_{1}...k_{q} } |_{h, \ome}, 
\end{align*} 
where $\chi_{K_{k_{1}...k_{q}}}$ is the 
characteristic function of $K_{k_{1}...k_{q}}$. 
Note that $C$ depends on the choice of 
$\{ \rho_{i} \}_{i\in I}$, but does not depend on $\e$. 
Therefore we have 
\begin{align*}
\| v_{\e}   \|_{h, \ome} 
\leq C \sum_{ k_{1},...,k_{q} \in I }p_{K_{k_{1}...k_{q}}}(\gamma_{\e})
\end{align*}
from the fundamental inequality 
$(\sum_{i=1}^{N}|a_{i}|)^{2} \leq 2^{N-1} 
\sum_{i=1}^{N}|a_{i}|^{2}$. 
The right hand side can be estimated by a constant independent of $\e$ 
by inequality (\ref{hol-est}).  
This completes the proof. 
\end{proof}

The proof of the following claim  
is based on an argument similar to that of Claim \ref{sol-2}. 
To avoid confusion, 
we use the following notation in the proof. 
\begin{defi}
Let $a_{\e}$ and $b_{\e}$ be $F$-valued $(n,k)$-forms on $Y$. 
We write $a_{\e}\equiv b_{\e}$, 
if there exists an $F$-valued $(n,k-1)$-form $c_{\e}$ on $Y$ 
such that $\dbar c_{\e}= a_{\e} - b_{\e}$ and 
the norm $\| c_{\e} \|_{h_{\e}, \ome}$ is uniformly bounded. 

\end{defi}

\begin{claim}\label{sol-3}
There exists an $F$-valued $(n, q-1)$-form  
$\widetilde{v}_{\e}$ on $Y$  satisfying the 
following properties\,$:$ 
\begin{equation*}
{\rm{(1)}}\hspace{0.2cm} \dbar \widetilde{v}_{\e}+U_{\e}=
g_{\e}(f_{\e}(U_{\e})).   
\quad 
{\rm{(2)}}\hspace{0.2cm} \text{The norm }
\|\widetilde{v}_{\e} \|_{h_{\e}, \ome} 
\text{ is uniformly bounded}. 
\end{equation*}
\end{claim}
\begin{proof}
For $\beta_{\e, i_{0}\dots i_{q-1}}$ with equality ($*$), 
we have $f_{\e}(U_{\e})=\delta \{ \beta_{\e, i_{0}\dots i_{q-1}}\}$. 
We observe  
\begin{align*}\label{eq-1}
\beta_{\e, k_{2}...k_{q}i_{0}} +
\sum_{\ell=2}^{q} (-1)^{\ell-1} \beta_{\e, k_{1}...\hat{k_{\ell}}...k_{q}i_{0}}
+(-1)^{q} \beta_{\e, k_{1}...k_{q}}. 
\end{align*}
\vspace{0.2cm} \\
{\bf{[Argument 4]}}
\vspace{0.1cm}\\
First we consider the second term. 
For an integer $\ell$ with $2 \leq \ell \leq  q$,  
the second term $ \beta_{\e, k_{1}...\hat{k_{\ell}}...k_{q}i_{0}}$ 
is independent of $k_{\ell}$. 
By the same reason as Argument 2 in Claim \ref{sol-2}, 
we can conclude that this term does not affect $g_{\e}(f_{\e}(U_{\e}))$. 
\vspace{0.2cm} \\
{\bf{[Argument 5]}}
\vspace{0.1cm}\\
Secondly we consider the third term. 
Our aim of Argument 5 is to show 
\begin{align*}
\sum_{k_{q}\in I} \dbar \rho_{k_{q}} \wedge  
\sum_{k_{q-1}\in I} \dbar  \rho_{k_{q-1}} \wedge  \cdots \wedge 
\sum_{k_{2}\in I} \dbar \rho_{k_{2}}  \wedge 
\sum_{k_{1}\in I} \dbar(\rho_{k_{1}} \beta_{\e, k_{1}...k_{q}})
\equiv 0. 
\end{align*}
When we define $\eta_{k_{2}...k_{k_{q}}}$ by 
\begin{equation*}
\eta_{k_{2}...k_{k_{q}}}:=
\rho_{k_{q}}
\dbar  \rho_{k_{q-1}} \wedge
\dbar \rho_{k_{q-2}}
\cdots  \wedge \dbar \rho_{k_{2}},   
\end{equation*}
the left hand side agrees with 
\begin{align*}
\dbar \Big( \sum_{k_{1}, \dots, k_{q}\in I}  \eta_{k_{2}...k_{k_{q}}}  \wedge 
\big( \dbar \rho_{k_{1}} \wedge \beta_{\e, k_{1}...k_{q}} +
\rho_{k_{1}} \wedge \dbar \beta_{\e, k_{1}...k_{q}} \big) \Big)
\end{align*}
by the Leibnitz rule. 
By Lemma \ref{rho} and the fundamental inequality $|a+b|^2 \leq 2(|a|^2+|b|^2)$, 
we obtain 
\begin{align*}
&\Big| \sum_{k_{1}, \dots, k_{q}\in I}  \eta_{k_{2}...k_{k_{q}}}  \wedge 
\big( \dbar \rho_{k_{1}} \wedge \beta_{\e, k_{1}...k_{q}} +
\rho_{k_{1}} \wedge \dbar \beta_{\e, k_{1}...k_{q}} \big)
\Big|^2_{h_{\e}, \ome} \\
\leq &
C \big( \big| 
\beta_{\e, k_{1}...k_{q}}\big|^2_{h_{\e}, \ome}+
\big| \dbar \beta_{\e, k_{1}...k_{q}} 
\big|^2_{h_{\e}, \ome} \big)
\end{align*}
for some positive constant $C>0$. 
The norms 
$\| \beta_{\e, k_{1}...k_{q}}\|_{h_{\e}, \ome}$ and $
\| \dbar \beta_{\e, k_{1}...k_{q}} 
\|_{h_{\e}, \ome}$ 
can be  
estimated by a constant independent of $\e$  
by the construction (see the proof of Proposition \ref{DWiso}).    
Therefore we have  
\begin{align*}
\sum_{k_{q}\in I} \dbar \rho_{k_{q}} \wedge  
\sum_{k_{q-1}\in I} \dbar  \rho_{k_{q-1}} \wedge  \cdots \wedge 
\sum_{k_{2}\in I} \dbar \rho_{k_{2}}  \wedge 
\sum_{k_{1}\in I} \dbar(\rho_{k_{1}} \beta_{\e, k_{1}...k_{q}})
\equiv 0. 
\end{align*}
\ 
\\
{\bf{[Argument 6]}}\\
Finally we consider the first term. 
Our aim is to show 
\begin{equation*}
\sum_{k_{q}\in I} \dbar \rho_{k_{q}} \wedge  
\sum_{k_{q-1}\in I} \dbar  \rho_{k_{q-1}} \wedge  \cdots \wedge 
\sum_{k_{2}\in I} \dbar \rho_{k_{2}}  \wedge 
\sum_{k_{1}\in I} \dbar(\rho_{k_{1}} \beta_{\e, k_{2}...k_{q}i_{0}})
\equiv U_{\e}. 
\end{equation*}
Since $\beta_{\e, k_{2}...k_{q}i_{0}}$ does not depend on $k_{1}$, 
we have 
\begin{align}\label{eq-2}
\sum_{k_{1}\in I} \dbar(\rho_{k_{1}} \beta_{\e, k_{2}...k_{q}i_{0}})
&= 
\dbar  \beta_{\e, k_{2}...k_{q}i_{0}}\\ \notag
&=\beta_{\e, k_{3}...k_{q}i_{0}} + 
\sum_{\ell=3}^{q} (-1)^{\ell}
\beta_{\e, k_{2}...\hat{k_{\ell}}...k_{q}i_{0}}+ 
(-1)^{q+1} \beta_{\e, k_{2}...k_{q}}.  
\end{align}
Note that the second equality follows from equality ($*$). 
The second term of the right hand side of (\ref{eq-2}) 
does not affect, by the same reason as 
Argument 2 (Argument 4). 
Moreover, for the third term of the right hand side, 
we can show  
\begin{align*}
\sum_{k_{q}\in I} \dbar \rho_{k_{q}} \wedge 
\sum_{k_{q-1}\in I} \dbar  \rho_{k_{q-1}}  \wedge \cdots \wedge
\sum_{k_{2}\in I} \dbar  (\rho_{k_{2}} \beta_{\e, k_{2}...k_{q}})
\equiv 0
\end{align*}
by the same method as Argument 5. 
In summary, 
we have proved   
\begin{align*}
&\sum_{k_{q}\in I} \dbar \rho_{k_{q}} \wedge  
\sum_{k_{q-1}\in I} \dbar  \rho_{k_{q-1}} \wedge  \cdots \wedge 
\sum_{k_{2}\in I} \dbar \rho_{k_{2}}  \wedge 
\sum_{k_{1}\in I} \dbar(\rho_{k_{1}} \beta_{\e, k_{2}...k_{q}i_{0}})\\ 
\equiv &
\sum_{k_{q}\in I} \dbar \rho_{k_{q}} \wedge  
\sum_{k_{q-1}\in I} \dbar  \rho_{k_{q-1}} \wedge  \cdots \wedge 
\sum_{k_{2}\in I} \dbar \rho_{k_{3}}  \wedge 
\sum_{k_{1}\in I} \dbar(\rho_{k_{2}} \beta_{\e, k_{3}...k_{q}i_{0}}).  
\end{align*}
By repeating this procedure, 
we obtain
\begin{align*}
&\sum_{k_{q}\in I} \dbar \rho_{k_{q}} \wedge  
\sum_{k_{q-1}\in I} \dbar  \rho_{k_{q-1}} \wedge  \cdots \wedge 
\sum_{k_{2}\in I} \dbar \rho_{k_{2}}  \wedge 
\sum_{k_{1}\in I} \dbar(\rho_{k_{1}} \beta_{\e, k_{2}...k_{q}i_{0}})\\ 
\equiv &  
\sum_{k_{q}\in I} \dbar  \rho_{k_{q}} \wedge
\sum_{k_{q-1}\in I}\dbar (\rho_{k_{q-1}}\beta_{\e, k_{q}i_{0}}) \\
= &  \sum_{k_{q}\in I}\dbar \rho_{k_{q}} \wedge
\dbar \beta_{\e, k_{q}i_{0}}\\ 
= &  \sum_{k_{q}\in I} \dbar \big( \rho_{k_{q}} 
(\beta_{\e, i_{0}} - \beta_{\e, k_{q}})\big).  
\end{align*}
The last equality follows from equality ($*$). 
The norm of $\beta_{\e, k_{q}}$ can be estimated by a constant of independent of $\e$ and 
$\dbar \beta_{\e, i_{0}} =  U_{\e}$ holds on $B_{i_{0}} \setminus Z$ by the construction. 
Hence we have 
\begin{align*}
\dbar \sum_{k_{q}\in I} \big( \rho_{k_{q}}\beta_{\e, k_{q}} \big)
 \equiv 0 \text{\quad and \quad}
\dbar \big( \sum_{k_{q}\in I}\rho_{k_{q}} \beta_{\e, i_{0}} \big)= 
\dbar  \beta_{\e, i_{0}} =  U_{\e}. 
\end{align*}
This completes the proof.
\end{proof}

From Claim \ref{sol-2} and \ref{sol-3} we can obtain the 
conclusion. 
Indeed, if we put $V_{\e}:=
v_{\e} - \widetilde{v}_{\e}$, then $V_{\e}$ 
satisfies the properties in Theorem \ref{cru}. 
\end{proof}


\end{document}